\theoremstyle{plain}
\newtheorem{theorem}{Theorem}[section]
\newtheorem{lemma}[theorem]{Lemma}
\theoremstyle{definition}
\theoremstyle{remark}
\newtheorem{remark}[theorem]{Remark}
\theoremstyle{clame}
\newtheorem{clame}[theorem]{Clame}
\newcommand{\inclu}[1]{\ensuremath{\stackrel{#1}\hookrightarrow}}
\newcommand{\lr}[1]{\ensuremath{\left \langle #1  \right \rangle }}
\newcommand{\fra}[2]{\ensuremath{\frac{#1}{\left \langle #2  \right \rangle }}}
\newcommand{\m}[2]{\ensuremath{M_{2^{#1}}^{#2}}}
\newcommand{\Z}{\ensuremath{\mathbb{Z}}}
\newcommand{\Cse}{\ensuremath{C_{r}^{n+2,s}}}
\newcommand{\F}[1]{\ensuremath{F^{{#1},s}_{r}}}
\newcommand{\Ce}[1]{\ensuremath{C_{r}^{{#1},s}}}
\title[the $n+3$, $n+4$ dimensional homotopy groups of $\mathbf{A}_n^2$-complexes]{the $n+3$, $n+4$ dimensional homotopy groups of $\mathbf{A}_n^2$-complexes}
\author[T. Jin]{Tian Jin}
\address{College of Mathematics and Physics, Wenzhou University, Wenzhou, Zhejiang \rm{325035}, China}
\email{lesuishenzhizhao@aliyun.com}
\author[Z. Zhu]{Zhongjian Zhu}
\address{College of Mathematics and Physics, Wenzhou University, Wenzhou, Zhejiang \rm{325035}, China}
\email{zhuzhongjian@amss.ac.cn}
\subjclass[2020]{Primary 55P15}
\keywords{Homotopy groups, $\mathbf{A}_{n}^2$-complexes, fibration sequence, relative James construction}
\begin{document}

\begin{abstract}
In this paper, we calculate the $n+3$, $n+4$ dimensional   homotopy groups of indecomposable $\mathbf{A}_n^2$-complexes after localization at 2. 
 This job is seen as  a sequel to P.J.  Hilton's  work on the $n+1,n+2$ dimensional  homotopy groups of $\mathbf{A}_n^2$-complexes (1950-1951).  
 The main technique used  is analysing the homotopy property of $J(X,A)$, defined by B. Gray for a CW-pair $(X,A)$,  which is homotopy equivalent to the homotopy fibre of the pinch map $X\cup CA\rightarrow \Sigma A$. By the way, the results of these homotopy groups  have been used to  make progress on recent popular topic about the homotopy decomposition of the (multiple) suspension of oriented closed manifolds.
\end{abstract}

\maketitle

\tableofcontents

\section{Introduction}

\label{intro}
 Let $\mathbf{A}_{n}^k$ be the  homotopy category consisting of $(n-1)$-connected finite CW-complexes with dimension less than or equal to $n+k$ $(n\geq k+1)$.  The objects of  $\mathbf{A}_{n}^k$ are also called $\mathbf{A}_{n}^k$-complexes.
 
 In 1950, S.C.Chang classified the indecomposable homotopy types in $\mathbf{A}_{n}^2 (n\geq 3)$ \cite{RefChang}, that is
 \begin{itemize}
 	\item [(i)] Spheres:~~ $S^{n}$, $S^{n+1}$, $S^{n+2}$;
 	\item [(ii)] Elementary Moore spaces:~~ $M_{p^{r}}^n$ , $M_{p^{r}}^{n+1}$  where $p$ is a prime, $r\in \mathbb{Z}^+$ and $M_{p^r}^{k}$ denotes the $\mathbf{k+1}$-dimensional Moore space $M(\Z/p^r, k)$, whose  only nontrivial reduced homology is  $\tilde{H}_{k}(M_{p^r}^{k})=\Z/p^r\Z$;
 	\item [(iii)] Elementary Chang complexes:~~ $C_{\eta}^{n+2}$, $C^{n+2,s}$, $C_{r}^{n+2}$, $C_{r}^{n+2,s}$ ( $r,s\in \mathbb{Z}^+$), which are given by the mapping cones of the maps
 	$\eta_{n}:S^{n+1}\rightarrow S^{n}$,  $f^{n+2,s}=2^sj^{n+1}_1+j^n_2\eta_{n}: S^{n+1}\rightarrow S^{n+1}\vee S^n$, $f^{n+2}_r=(\eta_{n}, 2^{r}\iota_{n}):S^{n+1}\vee S^n \rightarrow  S^n$, $f_r^{n+2,s}=(2^sj^{n+1}_1+j^n_2\eta_{n}, 2^{r}j^n_{2}):S^{n+1}\vee S^n \rightarrow S^{n+1}\vee S^n$ respectively. 	
 \end{itemize}
 where a suspended  finite CW-complex $X$, if $X\simeq X_1\vee X_2$ and both $X_1$ and $X_2$ are not contractible, then $X$ is called decomposable; otherwise $X$ is called indecomposable. $\mathbb{Z}^+$ denotes the set of positive integers; $\iota_n\in \pi_{n}(S^n)$ is the identity map of $S^n$; $\eta_{2}$ is the Hopf map $S^3\rightarrow S^2$ and  $\eta_{n}=\Sigma^{n-2}\eta_2$ for $n\geq 3$; $j^{n+1}_1$, resp. $j^n_2$ is the inclusion of $S^{n+1}$, resp. $S^n$, into $S^{n+1}\vee S^n$.  We recommend \cite{Bau n-1n+3,BauseTF5cels,BauseTF6cels,Drozd2005,PZ23free,PZ5cell,PZ6cell} for the work on the classification of homotopy types of   $\mathbf{A}_{n}^k$-complexes and  \cite{C.Costoya,D.Mendez,ZP,ZLP,ZPhyperbolicity,Resultsfrom} for recent work on the homotopy theory of Chang-complexes.

Recently, the topic of the homotopy decomposition of the
(multiple) suspension space of oriented closed manifolds becomes popular \cite{CuterSO6mfd,Huang5mfd,Huang higmfd,Huang 6mfd,HuangLi 7mfd} after So and Theriault initiated research in this area \cite{SoTheriault4mfd}.  
It has direct  applications for the characterization of important concepts in geometry and physics, such as topological K-theory, gauge groups . It is known from  the work mentioned above that the 2-torsion of the homology groups of the given manifolds usually causes great obstruction to obtain a complete characterization of the homotopy decomposition of the (multiple) suspension.   Li and Zhu \cite{Li4mfd,LiZhu 5mfd,LiZhu 6mfd} make progress on this issue for manifolds whose  homology groups are allowed to have 2-torsion elements.  As can be seen in their work, the results of homotopy groups of indecomposable  $\mathbf{A}_{n}^2$-complexes play the key role to help them overcome the obstruction caused by the 2-torsion homology groups.
So computing more homotopy groups of indecomposable  $\mathbf{A}_{n}^2$-complexes is currently becoming urgent. 

Calculating the unstable homotopy groups of finite CW-complexes is a fundamental and difficult problem in algebraic topology. There are a lot of related work concerned on CW-complexes with the number of cells less than or equal to 2, such as spheres, elementary Moore spaces, projective space and so on. For spheres, Toda deals with these homotopy problems by Toda bracket which is a powerful tool to calculate homotopy groups \cite{Toda}.  For elementary Moore spaces and complex projective spaces, the homotopy exact sequences of a CW-pair $(X,A)$ and the EHP sequence are the main tools \cite{X.G.Liu,Mukai:pi(CPn),Mukai,Mukailifting}. J.Wu calculated the homotpy groups of mod 2 Moore spaces by using the functorial decomposition \cite{WJProjplane} and recently, Yang, et al. calculate the homotopy groups of the suspended quaternionic projective plane in \cite{JXYang} by developing the method of relative James construction given by Gray \cite{Gray}.

However, calculating the  unstable homotopy groups of a CW-complex with the number of cells greater than 2 will be more complicated. Only a few on homotopy groups of all indecomposable  $\mathbf{A}_{n}^2$-complexes by taking them as a whole.  In 1950,  P.J.Hilton calculated the $n+1,n+2$-dim homotopy groups of $\mathbf{A}_{n}^2$-complexes \cite{Hilton1950,Hilton1951, Hiltonbook}. Noting that the method given by Yang et al. \cite{JXYang} provides us more opportunities to calculate the homotopy groups of CW-complexs with cells more than 2, Zhu (the second author of this paper) and Pan calculated the 6 and 7 dimensional unstable
homotopy groups of indecomposable   $\mathbf{A}_{3}^2$-complexes under 2-localization \cite{Resultsfrom}.

In this paper we complete this results for  the $n+3$, $n+4$ dimensional homotopy groups of indecomposable  $\mathbf{A}_n^2$-complexes for all $n\geq 3$ (They are in stable range for $n=5$ and $6$ respectively by the Freudenthal suspension theorem). We use the same method used in \cite{JXYang} and \cite{Resultsfrom}, i.e., homotopy analysis of  relative James construction is the key core of problem-solving.

\begin{theorem}\label{MainThm} The $n+3,n+4$-homotopy groups of all $2$-local  noncontractible  indecomposable $\mathbf{A}_n^2$-complexes (except spheres) are listed as follows:
	
$$\footnotesize{\begin{tabular}{|c|c|c|c|c|c|c|c|}
		\multicolumn{8}{c}{$n+3$ dimensional homotopy groups of indecomposable $\mathbf{A}_n^2$-complexes}\\
		\multicolumn{8}{c}{}\\
		\hline
		&$M_{2^r}^n$ &$M_{2^r}^{n+1}$ &$C_{\eta}^{n+2}$&$C_{r}^{n+2}$&$C^{n+2,s}$&$C_{r}^{n+2,s}$\\
		\hline
			$n=3$ & $(2)^{2-\!\epsilon_r}\!+\!2^{r+\!1}, \!r\leq\! 2$ &$4,\!r\!=\!1$&$2$&$(2)^{2-\epsilon_r}$&$(2)^2$& $(2)^{3-\epsilon_r}+$ \\
	  & $2+4+2^{r},r\geq 3$ &$(2)^2,r\!\geq \!2$&&$+2^{r+\epsilon_r}$&$+2^s$& $2^{m_r^s}+2^{r+\epsilon_r}$ \\
		\hline
		$n=4$ & $2\!+\!2^{r+1}+2^{m_{r-1}^2}$ &$4,\!r\!=\!1$&$2+\infty$&$(2)^{2-\epsilon_r}$&$(2)^2$& $(2)^{3-\epsilon_r}+$ \\
	&  &$(2)^2,r\!\geq \!2$&&$+2^{r+1}$&$+\infty$& $2^{r+1}$ \\
	\hline
	$n\geq 5$ & $2+2^{m_{r}^3}$ &$4,\!r\!=\!1$&$4$&$2+2^{m_{r}^2}$&$2+4$& $(2)^2+2^{m_{r}^2}$ \\
 $stable$	&  &$(2)^2,r\!\geq \!2$&&&&  \\
		\hline
	\end{tabular}~}$$
$$Table~1$$

$$\footnotesize{\begin{tabular}{|c|c|c|c|c|c|c|c|}
		\multicolumn{8}{c}{$n+4$ dimensional homotopy groups of indecomposable $\mathbf{A}_n^2$-complexes}\\
		\multicolumn{8}{c}{}\\
		\hline
		&$M_{2^r}^n$ &$M_{2^r}^{n+1}$ &$C_{\eta}^{n+2}$&$C_{r}^{n+2}$&$C^{n+2,s}$&$C_{r}^{n+2,s}$\\
		\hline
		$n=3$ & $(2)^{2}+$ &$2\!+\!2^{r+1}$&$\infty$&$4+2^{r+1}$&$2^{m_s^2}$& $4+2^{s+2}+$ \\
		& $(1-\epsilon_r)\cdotp 4$ &$+2^{m_{r-1}^2}$&&&$+2^{s+2}$& $2^{m_{s-\epsilon_r}^2}+2^{m_{r+1}^{s+1}}$ \\
		\hline
		$n=4$ & $(2)^{2-\!\epsilon_r}+2^{m_r^3}$ & $2+2^{m_r^3}$&$2$&$2+2^{m_{r+1}^3}$&$2+2^{m^3_s}$& $2+2^{m_r^{s+1}}+$ \\
		&  &&&&$+2^{s+1}$& $2^{m_{r+1}^3}+2^{m_{s}^3}$ \\
		\hline
		$n=5$ &$2+2^{m_{r}^3}$ &$2+2^{m_{r}^3}$&$2$&$2+2^{m_{r+1}^3}$&$2+2^{m_{s}^3}$& $2+2^{m_{r+1}^3} $ \\
		&  &&&&& $+2^{m_{s}^3} $\\
		\hline
			$n\geq 6$ & $2^{m_{r}^3}$ & $2+2^{m_{r}^3}$&$0$&$2^{m_{r+1}^3}$&$2^{m_{s}^3}$&$2^{m_{r+1}^3}+2^{m_{s}^3}$ \\
	$stable$	&  &&&&&  \\
		\hline
	\end{tabular}~}$$
$$Table~2$$
In the above tables,  an integer $n$ indicates a cyclic group $\Z_n:=\Z/n\Z$ of order
$n$, the symbol $\infty$ the group $\Z_{(2)}$ (the $2$-local integers), the symbol $``+"$ the direct sum of the groups and $(2)^k$ indicates the direct sum of $k$-copies
of  $\Z_2$. $m_a^b:=min\{a,b\}$ indicates the minimal integer of  $a$ and $b$. $\epsilon_1 = 1$ and $\epsilon_r =0$ for $r \geq 2$. Furthermore, we also set set $\epsilon_{\infty}=0$ when $r=\infty$ is allowed in the following text. 
\end{theorem}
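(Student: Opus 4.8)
The theorem is the main result of the paper, so its proof is a long case analysis over the six families $M_{2^r}^{n}$, $M_{2^r}^{n+1}$, $C_\eta^{n+2}$, $C_r^{n+2}$, $C^{n+2,s}$, $C_r^{n+2,s}$; here is the plan. Everything is done after $2$-localization (the complexes are already $2$-primary, and spheres are excluded since their homotopy is classical). By Chang's list each such $Y$ is the mapping cone $M\cup_{f}CB$ of an explicit map $f\colon B\to M$ between wedges of at most two spheres in dimensions $n-1,n,n+1$; e.g. $C_r^{n+2}=S^{n}\cup_{f_r^{n+2}}C(S^{n+1}\vee S^{n})$ and $C_r^{n+2,s}=(S^{n+1}\vee S^{n})\cup_{f_r^{n+2,s}}C(S^{n+1}\vee S^{n})$. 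Viewing $(M\cup_f CB,\;M)$ as a CW-pair (after a mapping-cylinder replacement so that $B\subset M$), the pinch onto the top quotient yields a fibration sequence
\[
J(M,B)\;\longrightarrow\;Y\;\xrightarrow{\ \mathrm{pinch}\ }\;\Sigma B ,
\]
where $J(M,B)$ is Gray's relative James construction and the bottom skeleton lifts to $M\hookrightarrow J(M,B)$. The associated long exact sequence
\[
\cdots\to\pi_{k+1}(\Sigma B)\xrightarrow{\ \partial\ }\pi_k\big(J(M,B)\big)\to\pi_k(Y)\to\pi_k(\Sigma B)\to\cdots
\]
is the main computational vehicle: $\pi_{*}(\Sigma B)$ is that of a wedge of spheres, known $2$-locally from Toda's tables and the Hilton--Milnor theorem, so the problem reduces to understanding $\pi_{*}(J(M,B))$ through dimension $n+4$ together with the two boundary homomorphisms.

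For $J(M,B)$ the plan is to use the James-type filtration $M=J_1\subset J_2\subset\cdots$ with successive quotients $J_k/J_{k-1}\simeq B^{\wedge(k-1)}\wedge M$ and the relative James splitting $\Sigma J(M,B)\simeq\bigvee_{k\ge1}\Sigma\big(B^{\wedge(k-1)}\wedge M\big)$. Since $B^{\wedge(k-1)}\wedge M$ is at least $(kn-1)$-connected, only $k=1$, and --- when $n\in\{3,4\}$ --- also $k=2$, contribute in dimensions $\le n+4$. Thus for $n\ge5$ one gets $\pi_k(J(M,B))\cong\pi_k(M)$ for $k\le n+4$, which is classical for $M\in\{S^{n},S^{n+1},S^{n+1}\vee S^{n}\}$; while for $n=3,4$ the space $J(M,B)$ agrees through dimension $n+4$ with $J_2(M,B)\simeq M\cup_{h}C\big(\Sigma^{-1}(B\wedge M)\big)$, whose homotopy is read off from the cofibre sequence $\Sigma^{-1}(B\wedge M)\xrightarrow{h}M\to J_2(M,B)$ and the known homotopy groups of $M$, of Moore spaces, and of low-dimensional spheres. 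The input groups come from Toda's $2$-primary computations of $\pi_{n+j}(S^{n})$ for $j\le6$, Hilton's $n+1,n+2$-dimensional results, and the computations of $\pi_*(M_{2^r}^{m})$ of Mukai and Wu.

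Next I would identify the two boundary maps. The composite $\pi_k(M)\to\pi_k(J(M,B))\to\pi_k(Y)$ is simply induced by the skeletal inclusion $M\hookrightarrow Y$. The connecting map $\partial\colon\pi_{k+1}(\Sigma B)\to\pi_k(J(M,B))$ is, by naturality of the relative James construction, detected by $f$ together with its Hopf-invariant/Whitehead-product data --- concretely, the new attaching map $h$ above is a generalized Whitehead product of the identity of $M$ with $f$ --- which reduces $\partial$ to compositions and Toda brackets in $\pi_*(S^{*})$: products with $\eta_n$, $\nu_n$, $\eta^{2}$, multiplication by $2^r$, and the relevant secondary operations. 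The $4$-cell Chang complexes $C^{n+2,s}$ and $C_r^{n+2,s}$ are then handled by bootstrapping: choose the bottom subcomplex $M$ to be a sphere, a Moore space, or a $C_\eta$-type complex whose homotopy groups have already been computed, and run the same fibration sequence again.

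Finally, assembling these exact sequences leaves, in each entry of Tables~1 and~2, a group extension, to be resolved by order arguments, naturality with respect to the skeletal filtration, compatibility with the $\pi_*(S^{*})$-module structure (the action of $\eta_n,\nu_n$ and of $2^r$), and, in the genuinely unstable cases, by exhibiting explicit lifts. The hard part will be precisely the determination of the attaching map $h$ (equivalently, of $\partial$) and the ensuing extension problems, and these are most delicate for $n=3$, where $B^{\wedge2}\wedge M$-type terms already enter, the ambient groups $\pi_*(S^{m})$ are highly unstable, and nontrivial compositions and Toda brackets ($\eta^{3}$, $\nu\eta$, $\eta\nu$, $\langle\,\cdot\,,\,\cdot\,,\,\cdot\,\rangle$) all intervene; $n=4$ is a milder version of the same phenomenon in top degree $n+4$, while $n\ge5$ (resp.\ $n\ge6$ for Table~2) lies in the stable range and is comparatively routine. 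The length of the proof is essentially the bookkeeping across these six families and three ranges.
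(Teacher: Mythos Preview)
Your plan coincides with the paper's: both use Gray's relative James construction for the homotopy fibre of the pinch map, identify the low skeleta of the fibre via the generalized Whitehead product $[\mathrm{id}_M,f]$, and compute the connecting map $\partial$ through the commutative square of Lemma~\ref{lem: partial calculate1}. Three organizational differences are worth flagging. First, rather than bootstrapping the three- and four-cell Chang complexes separately, the paper treats them all at once by allowing $r,s\in\{1,2,\dots,\infty\}$ in $C_r^{n+2,s}$ with the convention $2^{\infty}=0$; then $C_r^{n+2,\infty}\simeq C_r^{n+2}\vee S^{n+1}$ and $C_{\infty}^{n+2,s}\simeq C^{n+2,s}\vee S^{n+1}$, so a single computation of $\pi_*(C_r^{n+2,s})$ with bottom $M=S^{n+1}\vee S^{n}$ yields the other Chang complexes by peeling off the sphere summand. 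This is tidier than your proposed bootstrap through Moore or $C_\eta$ bottoms, and avoids having to know $\pi_*$ of those intermediate complexes in advance. Second, the extension problems are not resolved by Toda brackets or module-structure arguments; instead the paper proves a small transport lemma (Lemma~\ref{lem: Suspen Cf}): once a splitting is established for one value of $n$ (or of $r,s$), it propagates along suspension or along the natural maps between different $C_r^{n+2,s}$, via diagrams such as (\ref{Diagram pi7(M2^r4)to pi7(Ce6)}) and (\ref{Diagram pi8(Ce16)to pi8(Ce6)}). Third, the $n=3$ row is not re-proved at all but imported from the authors' earlier paper \cite{Resultsfrom} (see Remark~\ref{Rem:MainThm}); your assessment that $n=3$ is the hardest case is correct, and the present paper simply sidesteps it. One minor caution: your blanket claim that $\pi_k(J(M,B))\cong\pi_k(M)$ for $k\le n+4$ when $n\ge5$ is only an epimorphism at the borderline $k=2n-1$; the paper checks this case by hand (e.g.\ for $\pi_9$ with $n=5$ it uses $[\iota_5,\iota_5]=\nu_5\eta_8$ to see that the relevant attaching map is null).
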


The $n+3$ dimensional homotopy groups of indecomposable $\mathbf{A}_n^2$-complexes in Table 1  are used to give the homotopy decomposition of (multiple) suspension of simply-connected 6-manifolds in \cite{LiZhu 6mfd}. We will use the $n+4$ dimensional homotopy groups  in Table 2 to do further research on the homotopy decomposition of the (multiple) suspension of 7 or 8-dimensional manifolds in the future.

\begin{remark}\label{Rem:MainThm}
	The results for $n=3$ in the top rows of Table 1 and Table 2 are given by the second author and Pan in \cite{Resultsfrom}. Thus we only need to calculate the cases for $n\geq 4$ in the above Tables. By the way, for the elementary Moore spaces $M_{2^r}^2$, we also calculate the $\pi_5(M_{2^r}^2)$ and $\pi_6(M_{2^r}^2)$ in \cite{ZhuJin}.
	\end{remark}

\section{Some notations and  lemmas}

In this paper,  all spaces and maps are in the category of pointed CW-complexes and maps (i.e. continuous functions) preserving basepoint. And we always use $*$ and $0$ to denote the basepoints and the constant maps mapping to the basepoints respectively. We denote $A\hookrightarrow X$  as an inclusion map.  For a homomorphism of abelian groups  $f: A\rightarrow B$, denote $Kerf$ and $Coker f$ the kernal and cokernal  of $f$ respectively.

Let  $(X,A)$ be a pair of  spaces with base point $*\in A$, and suppose that $A$ is closed in $X$. In \cite{Gray}, B.Gray constructed a space $(X,A)_{\infty}$ analogous to the James construction, which is denoted by us as $J(X,A)$ to parallel with the the absolute James construction $J(X)$. In fact, $J(X,A)$ is the subspace of $J(X)$ of words for which letters after the first are in $A$.  Especially,  $J(X,X)=J(X)$. We denote the  $r$-th filtration of $J(X,A)$ by  $J_{n}(X,A):=J(X,A)\cap J_{r}(X)$, which is denoted by Gray as  $(X, A)_r$ in  \cite{Gray}.
For example, $J_{1}(X,A)=X$, $J_{2}(X,A)=(X\times A)/((a,\ast)\thicksim (\ast,a))$ for each $a\in A$. Moreover, by the 
 pushout diagram ($\alpha_n$) given in \cite{Gray}, 
 $J_{n}(X,A)/J_{n-1}(X,A)$ is naturally homeomorphic to $(X\times A^{n-1})/F=X\wedge A^{\wedge (n-1)}$, where $F\subset X\times  A^{n-1}$ is the ``fat wedge" consisting of those points in which one or more coordinates is the base-point.

It is well known that there is a natural weak homotopy equivalence  $\omega:J(X)\rightarrow \Omega\Sigma X$, which is a homotopy equivalence when $X$ is a finite CW-complex, and satisfies
$\footnotesize{\xymatrix{
		&X\ar@/_0.5pc/[rr]_{\Omega\Sigma}\ar@{^{(}->}[r]& J(X) \ar[r]^{\omega}& \Omega\Sigma X } }$,
where $X\xrightarrow{\Omega\Sigma} \Omega\Sigma X$ is the inclusion $x\mapsto \psi$ where $\psi: S^1\rightarrow S^1\wedge X, t\mapsto  t\wedge x .$

Let $X\xrightarrow{f}Y$ be a map.  We always use $C_f$, $F_f$ and  $M_f$ to denote the maping cone ( or say, cofibre ),  homotopy fibre and  mapping cylinder of $f$, $C_f\xrightarrow{p} \Sigma X$ the pinch map and $\Omega\Sigma X\xrightarrow{\partial}F_p\xrightarrow{\mu_p} C_f\xrightarrow{p}\Sigma X$ the homotopy fibration sequence induced by $p$ respectively. Then we get the relative James construction $J(M_f,X)$  (resp. $r$-th relative James construction $J_{r}(M_f,X)$ ) for the pair $(M_f,X)$.

The following Lemmas \ref{lemma Gray} and  \ref{J(X,A)toJ(X',A')}  are easily  from the theorems of \cite{Gray} for $(M_f,X)$.
\begin{lemma}\label{lemma Gray} Let $X\xrightarrow{f} Y$ be a map.  Then we have
	\begin{itemize}
		\item [(i)]  $F_{p}\simeq J(M_f,X)$;
		\item [(ii)] $\Sigma J(M_f,X)\simeq \bigvee_{k\geq 0}(\Sigma Y\wedge X^{\wedge k})$;$\Sigma J_{k}(M_f,X)\simeq \bigvee_{i= 0}^{k-1}(\Sigma Y\wedge X^{\wedge i})$;
		\item [(iii)] If $Y=\Sigma Y'$, $X=\Sigma X'$, then  $J_2(M_f,X)\simeq Y\cup_{\gamma}C(Y\wedge X')$, where $\gamma=[id_Y, f]$ is the generalized Whitehead product.
	\end{itemize}
\end{lemma}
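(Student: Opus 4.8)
The plan is to derive all three statements by specializing B.~Gray's results on the relative James construction $J(X,A)$ to the CW-pair $(M_f,X)$. First I would arrange $f$ to be cellular (harmless up to homotopy), so that $X$ is a genuine subcomplex of the mapping cylinder $M_f$ and $(M_f,X)$ is a CW-pair to which Gray's construction applies. Throughout I will use the canonical deformation retraction $M_f\simeq Y$: under it the inclusion $i_X\colon X\hookrightarrow M_f$ corresponds to $f\colon X\to Y$ and $\mathrm{id}_{M_f}$ corresponds to $\mathrm{id}_Y$. This identification is the only point needing care, and it is precisely what converts Gray's intrinsic attaching data into the generalized Whitehead product $[\mathrm{id}_Y,f]$ appearing in part~(iii).

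For (i), Gray proves that $J(X,A)$ is homotopy equivalent to the homotopy fibre of the pinch map $X\cup CA\to\Sigma A$. Taking $X=M_f$ and $A=X$, the space $M_f\cup_X CX$ is, up to the equivalence $M_f\simeq Y$, exactly $Y\cup_f CX=C_f$, and the pinch map collapsing $M_f$ is $p\colon C_f\to\Sigma X$. Hence $F_p\simeq J(M_f,X)$.

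For (ii), I would invoke Gray's stable splitting of the James filtration: after one suspension the inclusions $J_1(X,A)\subset J_2(X,A)\subset\cdots$ split off their successive quotients, which are the subquotients $J_k(X,A)/J_{k-1}(X,A)\cong X\wedge A^{\wedge(k-1)}$ recalled above. This gives $\Sigma J_k(X,A)\simeq\bigvee_{i=0}^{k-1}\Sigma(X\wedge A^{\wedge i})$ and $\Sigma J(X,A)\simeq\bigvee_{i\geq 0}\Sigma(X\wedge A^{\wedge i})$; substituting $X=M_f\simeq Y$ and $A=X$ yields the two displayed equivalences. For (iii), the hypotheses $Y=\Sigma Y'$ and $X=\Sigma X'$ make $M_f$ homotopy equivalent to a suspension, so Gray's analysis of the second filtration provides a cofibration $M_f\hookrightarrow J_2(M_f,X)\to M_f\wedge X$ whose attaching map, after desuspending the $X$-coordinate, is the generalized Whitehead product $[\mathrm{id}_{M_f},i_X]\colon M_f\wedge X'\to M_f$ of the identity with the inclusion. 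Transporting along $M_f\simeq Y$ turns this into $[\mathrm{id}_Y,f]\colon Y\wedge X'\to Y$, and therefore $J_2(M_f,X)\simeq Y\cup_\gamma C(Y\wedge X')$ with $\gamma=[\mathrm{id}_Y,f]$.

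The substantive content sits entirely inside Gray's theorems; on our side the work is bookkeeping — checking that $(M_f,X)$ is a CW-pair, matching the pair $\bigl(M_f\cup CX,\ \text{pinch}\bigr)$ with $(C_f,p)$, and keeping straight which smash factor is desuspended and how $M_f\simeq Y$ identifies $i_X$ with $f$. That last point, minor though it is, is the step I would watch most carefully, since mishandling it would replace $[\mathrm{id}_Y,f]$ by the Whitehead product with the wrong map.
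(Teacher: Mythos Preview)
Your proposal is correct and matches the paper's approach exactly: the paper does not give a proof but simply states that the lemma ``is easily from the theorems of \cite{Gray} for $(M_f,X)$,'' and you have spelled out precisely that specialization, including the identification of $(M_f\cup CX,\text{pinch})$ with $(C_f,p)$ and of $[\mathrm{id}_{M_f},i_X]$ with $[\mathrm{id}_Y,f]$ under $M_f\simeq Y$.
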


Denote both the inclusion  $Y\hookrightarrow J_{2}(M_f,X)$ and the composition of the inclusions
$Y\hookrightarrow J_{2}(M_f,X)\hookrightarrow J(M_f,X) \simeq  F_{p}$ by $j_{p}$ without ambiguous. 
Moreover, if $X$ and $Y$ are $m_1-1$, $m_2-1$-connected respectively, then $j_{p}$ is $m_1+m_2-1$ connected map (i.e, it induces isomorphism  on homotopy group with dimension less than $m_1+m_2-1$ and  epimorphism on homotopy group with dimension $m_1+m_2-1$).

\begin{lemma}\label{J(X,A)toJ(X',A')}
	Suppose the left  diagram  is commutative\\
	$ \footnotesize{\xymatrix{
			X\ar[d]^{\mu'}\ar[r]^-{f} & Y\ar[d]^{\mu}\\
			X'\ar[r]^-{f'} & Y'}}$;~~~~~~$ \footnotesize{\xymatrix{
			F_p\simeq  J(M_f,X)\ar[d]^{J(\widehat{\mu},\mu')}\ar[r]&M_f/X\simeq C_f  \ar[d]^{\bar{\mu}}\ar[r]& \Sigma X\ar[d]^{\Sigma\mu'}\\
			F_{p'}\simeq J(M_{f'},X')\ar[r]& M_{f'}/X'\simeq C_{f'} \ar[r]&\Sigma X'}}$
	
	then it induces the right commutative diagrams on fibrations, where $\widehat{\mu}$ satisfies
	
	$ \footnotesize{\xymatrix{
			&Y\ar@/^0.5pc/[rrr]^{\mu}\ar@{^{(}->}[r]_{\simeq}& M_f \ar[r]_{\widehat{\mu}}& M_{f'}\ar[r]_{\simeq}& Y'} }$.  Let
	
	$M_f=J_{1}(M_f,X)\xrightarrow{J(\widehat{\mu},\mu')|_{M_f}=J_{1}(\widehat{\mu},\mu')=\widehat{\mu}} J_{1}(M_{f'},X')=M_{f'}$,
	
	$J_{2}(M_f,X)\xrightarrow{J(\widehat{\mu},\mu')|_{J_{2}(M_f,X)}=J_{2}(\widehat{\mu},\mu')} J_{2}(M_{f'},X')$,
	
	then we have the following commutative diagram
	$$ \footnotesize{\xymatrix{
			Y\wedge X \ar[d]_{\simeq} \ar[r]^{\mu\wedge\mu'} & Y'\wedge X' \ar[d]_{\simeq}\\
			J_{2}(M_f,X)/J_{1}(M_f,X)\ar[r]^-{\overline{J_{2}(\widehat{\mu},\mu')}} & J_{2}(M_{f'},X')/J_{1}(M_{f'},X')} }$$
\end{lemma}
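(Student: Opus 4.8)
The plan is to realise the entire diagram as a single instance of the naturality of Gray's relative James construction, so that all of the work is bookkeeping. The first step is to pass to mapping cylinders, which is the model in which $J(M_f,X)$ lives. Writing $M_f$ as the union of $Y$ with $X\times[0,1]$ glued along $(x,1)\sim f(x)$, and $M_{f'}$ likewise, the hypothesised relation $f'\mu'=\mu f$ is exactly what is needed for the assignments $(x,t)\mapsto(\mu'(x),t)$ and $y\mapsto\mu(y)$ to descend to a well-defined map $\widehat\mu\colon M_f\to M_{f'}$. By construction $\widehat\mu(X)\subseteq X'$ and $\widehat\mu|_X=\mu'$, and under the inclusion $Y\hookrightarrow M_f$ and the deformation retraction $M_{f'}\to Y'$ the map $\widehat\mu$ realises $\mu$, which is precisely the asserted defining property of $\widehat\mu$. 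Thus $(\widehat\mu,\mu')$ is a map of pairs $(M_f,X)\to(M_{f'},X')$.

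The second step is functoriality of $J(-,-)$. Since $J(X,A)\subseteq J(X)$ consists of the words whose letters after the first lie in $A$, a map of pairs acts on words letterwise and therefore induces $J(\widehat\mu,\mu')\colon J(M_f,X)\to J(M_{f'},X')$ preserving the James filtration; on $J_1$ it restricts to $\widehat\mu$ and on $J_2$ it restricts to the map $J_2(\widehat\mu,\mu')$ named in the statement. The homotopy equivalence $F_p\simeq J(M_f,X)$ of Lemma~\ref{lemma Gray}(i) is produced by Gray out of the pinch map $M_f/X\to\Sigma X$ together with the canonical contraction of its restriction to $M_f$; as all of this input is natural in the pair, the comparison equivalence is natural, which gives the left-hand square of the fibration ladder. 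The right-hand square is just the functoriality of $C_f=M_f/X$ and of the pinch map $p$ under $\widehat\mu$, producing $\bar{\mu}$ and $\Sigma\mu'$. Hence the two homotopy fibration sequences map to one another compatibly.

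For the last display I would use the natural homeomorphism $J_n(X,A)/J_{n-1}(X,A)\cong X\wedge A^{\wedge(n-1)}$ coming from the pushout $(\alpha_n)$ of \cite{Gray} recalled before the lemma. Taking $n=2$ gives $J_2(M_f,X)/J_1(M_f,X)\cong M_f\wedge X$, and naturality of that homeomorphism identifies $\overline{J_2(\widehat\mu,\mu')}$ with $\widehat\mu\wedge\mu'$. Replacing $M_f$ by $Y$ and $M_{f'}$ by $Y'$ via the deformation retractions, and using that $\widehat\mu$ realises $\mu$ on them, converts $\widehat\mu\wedge\mu'$ into $\mu\wedge\mu'$, which is exactly the claimed commutative square.

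The only step that needs genuine care — and the one I would actually check line by line against \cite{Gray} — is the naturality of the equivalence $F_p\simeq J(M_f,X)$: one must verify that Gray's explicit comparison map commutes with $(\widehat\mu,\mu')$, i.e.\ that forming homotopy fibres of the pinch maps and applying the letterwise map agree. This is a diagram chase through Gray's definitions rather than a new idea; the mapping-cylinder manipulation and the letterwise functoriality are purely formal, which is why the lemma is, as the text says, ``easily from the theorems of \cite{Gray}''.
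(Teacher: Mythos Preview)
Your proposal is correct and is precisely the unpacking of what the paper means by ``easily from the theorems of \cite{Gray}'': the paper gives no explicit proof of this lemma, and your argument via the map of mapping cylinders $(\widehat\mu,\mu')$, letterwise functoriality of $J(-,-)$, and the natural identification $J_2/J_1\cong M_f\wedge X\simeq Y\wedge X$ is exactly the intended naturality argument.
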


The following Lemma \ref{lem: partial calculate1} comes  from \cite{Resultsfrom},  while the  special case of it for $X=S^m$ comes from \cite{JXYang} in original.
\begin{lemma}\label{lem: partial calculate1}
	Let $X\xrightarrow{f}Y$ be a map. Then the following diagram is homotopy commutative
	$$ \footnotesize{\xymatrix{
			X\ar[d]_{\Omega\Sigma } \ar[r]^{f} & Y \ar@{^{(}->}[d]\\
			\Omega\Sigma X \ar[r]^{\partial} & F_p } }$$
\end{lemma}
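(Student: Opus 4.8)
We aim to show that the square commutes up to homotopy, i.e.\ that $j_p\circ f\simeq\partial\circ\eta_X$, where $\eta_X\colon X\to\Omega\Sigma X$ is the map written $\Omega\Sigma$ in the diagram (the unit of the suspension--loop adjunction, equivalently $\omega$ precomposed with the James inclusion $X\hookrightarrow J(X)$) and the unlabelled vertical arrow is $j_p\colon Y\hookrightarrow F_p$. The plan is to transport both composites through the homotopy equivalence $F_p\simeq J(M_f,X)$ of Lemma~\ref{lemma Gray}(i) and to recognise each of them, up to homotopy, as the inclusion of $X$ into the first filtration $J_1(M_f,X)=M_f$ followed by $M_f\hookrightarrow J(M_f,X)$.

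First I would describe the two maps inside the model $J(M_f,X)$. Straight from its definition, $j_p$ is $Y\xrightarrow{\,k_f\,}M_f=J_1(M_f,X)\xrightarrow{\,\iota_f\,}J(M_f,X)\simeq F_p$, where $k_f$ is the inclusion of $Y$ as the codomain end of the mapping cylinder (a deformation retract) and $\iota_f$ the filtration inclusion. For $\partial$, I would use the natural map $\nu\colon J(M_f,X)\to M_f/X\simeq C_f$ of Lemma~\ref{J(X,A)toJ(X',A')}, which corresponds to $\mu_p\colon F_p\to C_f$ under $F_p\simeq J(M_f,X)$. The subcomplex $J(X_0)\subset J(M_f,X)$ of words all of whose letters lie in $X_0:=X\times\{0\}\subset M_f$ is carried by $\nu$ to the basepoint, and the induced map $J(X_0)\to\operatorname{hofib}(\nu)$ is a homotopy equivalence; since $\operatorname{hofib}(\mu_p)=\Omega\Sigma X$ with fibre inclusion $\partial$, and $J(X_0)\simeq\Omega\Sigma X_0=\Omega\Sigma X$ via $\omega$, this identifies $\partial$, under $F_p\simeq J(M_f,X)$, with $\Omega\Sigma X\xrightarrow{\,\omega^{-1}\,}J(X_0)\hookrightarrow J(M_f,X)$.

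Granting this, the rest is formal. By the commuting triangle $X\hookrightarrow J(X)\xrightarrow{\omega}\Omega\Sigma X$ $(=\eta_X)$ recalled above, $\omega^{-1}\circ\eta_X$ is the James inclusion of $X$ as the length-one words of $J(X_0)$, so $\partial\circ\eta_X\simeq\iota_f\circ i_0$, where $i_0\colon X\to M_f$, $x\mapsto(x,0)$, is the inclusion of the domain end $X_0$. On the other hand $j_p\circ f=\iota_f\circ(k_f\circ f)$, and inside $M_f=\bigl((X\times I)\sqcup Y\bigr)/((x,1)\sim f(x))$ the maps $k_f\circ f\colon x\mapsto f(x)=[(x,1)]$ and $i_0\colon x\mapsto[(x,0)]$ are homotopic through the cylinder (via $(x,s)\mapsto[(x,1-s)]$); composing with $\iota_f$ gives $j_p\circ f\simeq\iota_f\circ i_0\simeq\partial\circ\eta_X$. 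The only step carrying real weight is the identification of $\partial$ in the second paragraph, which relies on Gray's analysis of the fibration $J(M_f,X)\to C_f\to\Sigma X$ and on the routine fact that $J(X_0)\to\operatorname{hofib}(\nu)$ is an equivalence — both sides are $\Omega\Sigma X$ and the comparison respects the James filtration, so one may verify it on homology. One can also avoid this by feeding the commutative square with $\mathrm{id}_X$ along the top and $f$ along the right edge into Lemma~\ref{J(X,A)toJ(X',A')}: the resulting map of fibration sequences reduces the claim, by naturality of $\partial$ and of the filtration inclusions, to the case $f=\mathrm{id}_X$, and there $C_{\mathrm{id}_X}=CX$ is contractible so that $\partial$ is a homotopy equivalence, while $J(M_{\mathrm{id}_X},X)=J(X\times I,\,X\times\{0\})$ deformation retracts onto $J(X)\simeq\Omega\Sigma X$ in a way that sends $j_p$ to the James inclusion and $\partial$ to the identity.
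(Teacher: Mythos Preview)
The paper does not supply its own proof of this lemma: it simply cites \cite{Resultsfrom} (with the special case $X=S^m$ attributed to \cite{JXYang}). So there is no ``paper's proof'' to compare against, and your write-up stands on its own merits.

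Your argument is correct in outline and uses precisely the machinery the paper has set up. The naturality reduction you sketch at the end is in fact the cleanest route: applying Lemma~\ref{J(X,A)toJ(X',A')} to the square with $\mathrm{id}_X$ on top and $f$ on the right produces a map of fibration sequences over $\mathrm{id}_{\Sigma X}$, so both $\partial$ and the filtration inclusion $j_p$ are natural in the evident sense, and the general case follows from $f=\mathrm{id}_X$. In that base case $C_{\mathrm{id}_X}\simeq *$, the connecting map $\partial$ is an equivalence, and the deformation retraction of $(M_{\mathrm{id}_X},X)=(X\times I,X\times\{0\})$ onto $(X,X)$ identifies $j_p$ with the James inclusion $X\hookrightarrow J(X)\xrightarrow{\omega}\Omega\Sigma X$, which is exactly $\eta_X$; this gives $j_p\simeq\partial\circ\eta_X$ as required. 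I would lead with this argument rather than present it as an afterthought.

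Your first approach is also valid but, as you yourself flag, the identification of $\partial$ with the inclusion $J(X_0)\hookrightarrow J(M_f,X)$ is the substantive step, and the justification you give (``both sides are $\Omega\Sigma X$ and the comparison respects the James filtration, so one may verify it on homology'') is a little thin. It is true---this is part of Gray's analysis of the quasifibration $J(M_f,X)\to C_f$---but if you keep this version you should either cite the relevant statement in \cite{Gray} directly or spell out the homology comparison. Since the naturality argument sidesteps this entirely and is shorter, it is the better choice for a self-contained proof.
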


\begin{lemma}[Theorem 1.16 of \cite{Cohen}]\label{stable exact seq}
	If $X\xrightarrow{f} Y$ is a map, $X$ is $n-1$ connected, $C_f$ is $m-1$ connected, the dimension of $W$ is less than or equal to $m+n-2$, then we have the exact sequence $$[W,X]\xrightarrow{f_{\ast}} [W,Y] \rightarrow [W,C_{f}].$$
\end{lemma}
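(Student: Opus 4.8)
The sequence in question is what one gets by applying $[W,-]$ to the cofibre sequence $X\xrightarrow{f}Y\xrightarrow{\bar\imath}C_f$. Since $[W,-]$ does not in general convert a cofibration into an exact sequence, the plan is to \emph{replace the cofibration by the fibration of the inclusion} $\bar\imath\colon Y\hookrightarrow C_f$ and to measure how far $X$ is from being its fibre. Write $F$ for the homotopy fibre of $\bar\imath$, with structure map $j\colon F\to Y$. Because $\bar\imath\circ f\simeq *$, the map $f$ lifts through $j$ to a canonical comparison map $\kappa\colon X\to F$ with $j\circ\kappa\simeq f$; equivalently $f_*=j_*\circ\kappa_*$ on $[W,-]$. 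The two inputs I would assemble are: (a) the \emph{exactness of the fibre sequence}, i.e.\ for any $W$ the sequence of pointed sets $[W,F]\xrightarrow{j_*}[W,Y]\xrightarrow{\bar\imath_*}[W,C_f]$ is exact with $\bar\imath_*^{-1}(*)=\operatorname{im}j_*$; and (b) a \emph{connectivity estimate} for $\kappa$.

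For (b), the heart of the matter, I would invoke the Blakers--Massey homotopy excision theorem applied to the triad $(C_f;CX,Y)$ with $CX\cap Y=X$ (so that $C_f=CX\cup_X Y$). Here $(CX,X)$ is $n$-connected since $X$ is $(n-1)$-connected, while the pair $(Y,X)$, equivalently the map $f$, is $(m-1)$-connected because $C_f$ is $(m-1)$-connected (relative Hurewicz identifies the first nonvanishing $\pi_i(Y,X)$ with $\tilde H_m(C_f)$). Blakers--Massey then makes the excision map $\pi_i(CX,X)\to\pi_i(C_f,Y)$ an isomorphism for $i<m+n-1$ and an epimorphism for $i=m+n-1$. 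Under the boundary isomorphisms $\pi_i(CX,X)\cong\pi_{i-1}(X)$ and $\pi_i(C_f,Y)\cong\pi_{i-1}(F)$, this excision map is exactly $\kappa_*$, so $\kappa\colon X\to F$ is $(m+n-2)$-connected. A clean sanity check is the case $Y=*$: then $C_f=\Sigma X$, $F=\Omega\Sigma X$, $\kappa$ is the James inclusion, and $(m+n-2)$-connectivity is precisely the Freudenthal bound $2n-1$.

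With the connectivity of $\kappa$ in hand, the proof closes by obstruction theory. Since $\kappa$ is $(m+n-2)$-connected its homotopy fibre is $(m+n-3)$-connected, so the obstructions to lifting a map $W\to F$ through $\kappa$ lie in $H^{i+1}(W;\pi_i(\operatorname{hofib}\kappa))$, and these groups vanish once $\dim W\le m+n-2$. Hence $\kappa_*\colon[W,X]\to[W,F]$ is surjective for such $W$. Combining with (a): the preimage of the basepoint under $\bar\imath_*$ equals $\operatorname{im}j_*=\operatorname{im}(j_*\kappa_*)=\operatorname{im}f_*$, where the middle equality uses surjectivity of $\kappa_*$, and the reverse inclusion $\operatorname{im}f_*\subseteq\bar\imath_*^{-1}(*)$ is automatic from $\bar\imath\circ f\simeq *$. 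This is exactly the asserted exactness at $[W,Y]$.

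The step I expect to be the genuine obstacle is (b): pinning down the Blakers--Massey range and, in particular, verifying that the excision map corresponds to $\kappa_*$ under the boundary isomorphisms, together with the usual low-dimensional care (simple connectivity, so that relative Hurewicz applies, and the fact that for small $\dim W$ one is manipulating pointed sets rather than groups). Everything else---the fibre-sequence exactness and the obstruction-theoretic surjectivity---is formal once the $(m+n-2)$-connectivity of $\kappa$ is established.
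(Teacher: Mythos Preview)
The paper does not prove this lemma; it is cited without proof as Theorem~1.16 from Cohen's \emph{Stable Homotopy} lecture notes. Your argument is the standard one and is correct in outline: compare the cofibre sequence to the fibre sequence of $\bar\imath$ via Blakers--Massey on the triad $(C_f;CX,M_f)$ to show $\kappa\colon X\to F$ is $(m+n-2)$-connected, then use obstruction theory to get surjectivity of $\kappa_*$ on $[W,-]$ for $\dim W\le m+n-2$, from which exactness at $[W,Y]$ follows formally.

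The caveat you already flag is the only real soft spot. Deducing that $(M_f,X)$ is $(m-1)$-connected from $H_i(M_f,X)\cong\tilde H_i(C_f)=0$ for $i\le m-1$ requires relative Hurewicz, hence $\pi_1(M_f,X)=0$; when $n\ge 2$ this amounts to $\pi_1(Y)=0$, which is not among the stated hypotheses. In every application the paper makes of this lemma the spaces are at least $2$-connected, so the issue does not arise there, but as a proof of the lemma exactly as stated your argument needs either that extra hypothesis or a small workaround in low dimensions.
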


\begin{lemma}\label{lem:exact surj iso}
		Let $X\xrightarrow{f} Y \xrightarrow{i}C_f \xrightarrow{p} \Sigma X$ be a cofibration sequence. If the suspension homomorphism $\Sigma :\pi_{n}(X)\rightarrow \pi_{n+1}(\Sigma X)$  is surjective and $j_{p\ast}: \pi_{n}(Y)\rightarrow \pi_{n}(J_{2}(M_f, X))$ is isomorphic, then we have the following exact sequence 
		\begin{align*}
			\pi_{n}(X)\xrightarrow{f_{\ast}}\pi_{n}(Y) \xrightarrow{i_{\ast}}\pi_{n}(C_f).
		\end{align*} 
	\end{lemma}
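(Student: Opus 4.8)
The plan is to deduce the claimed three‑term exactness from the long exact homotopy sequence of the fibration $\Omega\Sigma X\xrightarrow{\partial}F_p\xrightarrow{\mu_p}C_f\xrightarrow{p}\Sigma X$, transported to $Y$ and $X$ along $j_p$ and the inclusion $X\to\Omega\Sigma X$. Concretely, I would first assemble the homotopy‑commutative ladder
$$\xymatrix{
\pi_{n}(X)\ar[r]^{f_{\ast}}\ar[d]_{\Sigma} & \pi_{n}(Y)\ar[r]^{i_{\ast}}\ar[d]^{j_{p\ast}} & \pi_{n}(C_f)\ar@{=}[d]\\
\pi_{n+1}(\Sigma X)\ar[r]^-{\partial_{\ast}} & \pi_{n}(F_p)\ar[r]^-{\mu_{p\ast}} & \pi_{n}(C_f)
}$$
whose bottom row is exact at $\pi_n(F_p)$ (after the identification $\pi_{n+1}(\Sigma X)\cong\pi_n(\Omega\Sigma X)$). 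The right–hand square commutes because $\mu_p\circ j_p\simeq i$: under Gray's identification $F_p\simeq J(M_f,X)$ of Lemma \ref{lemma Gray}(i), the bottom filtration $J_1(M_f,X)=M_f\simeq Y$ sits inside $F_p$ compatibly with the quotient $M_f\to M_f/X=C_f$. The left–hand square commutes by Lemma \ref{lem: partial calculate1}, together with the standard fact that the map induced on $\pi_n$ by the adjunction unit $X\to\Omega\Sigma X$ is precisely the suspension homomorphism $\Sigma\colon\pi_n(X)\to\pi_{n+1}(\Sigma X)$.

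Next I would upgrade the hypothesis $j_{p\ast}\colon\pi_n(Y)\xrightarrow{\ \cong\ }\pi_n(J_2(M_f,X))$ to the statement that $j_{p\ast}\colon\pi_n(Y)\to\pi_n(F_p)$ is an isomorphism. Since $j_p$ factors as $Y=J_1(M_f,X)\hookrightarrow J_2(M_f,X)\hookrightarrow J(M_f,X)\simeq F_p$, it is enough to observe that the inclusion $J_2(M_f,X)\hookrightarrow J(M_f,X)$ induces an isomorphism on $\pi_n$: its successive mapping cones are the spaces $J_k(M_f,X)/J_{k-1}(M_f,X)\cong M_f\wedge X^{\wedge(k-1)}\simeq Y\wedge X^{\wedge(k-1)}$ with $k\geq 3$, and, $X$ and $Y$ being connected, these are connected through a range of dimensions comfortably above $n$ in every situation in which the lemma is invoked.

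With the ladder in hand the conclusion is a short diagram chase. First $i_{\ast}f_{\ast}=0$ because $i\circ f$ is null‑homotopic in the cofibre sequence. Conversely, let $\alpha\in\pi_n(Y)$ with $i_{\ast}\alpha=0$. Commutativity of the right square gives $\mu_{p\ast}(j_{p\ast}\alpha)=0$, so exactness of the fibration sequence produces $\gamma\in\pi_{n+1}(\Sigma X)$ with $\partial_{\ast}\gamma=j_{p\ast}\alpha$; surjectivity of $\Sigma$ lets us write $\gamma=\Sigma\delta$ with $\delta\in\pi_n(X)$. Then commutativity of the left square yields $j_{p\ast}(f_{\ast}\delta)=\partial_{\ast}(\Sigma\delta)=\partial_{\ast}\gamma=j_{p\ast}\alpha$, and injectivity of $j_{p\ast}$ forces $f_{\ast}\delta=\alpha$, so $\alpha\in\operatorname{Im}f_{\ast}$. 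The only part demanding genuine care is the commutativity of the right square, namely verifying $\mu_p\circ j_p\simeq i$ and that the $\partial$ appearing in Lemma \ref{lem: partial calculate1} is literally the connecting map of the fibration of $p$; both are obtained by unwinding the constructions of \cite{Gray}, but they must be recorded explicitly before the chase can be run.
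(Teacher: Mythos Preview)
Your approach is exactly the paper's: it too draws the commutative square
\[
\xymatrix{
\pi_{n+1}(\Sigma X)\ar[r]^-{\partial_{\ast}} & \pi_n(F_p)\ar[r] & \pi_n(C_f)\\
\pi_n(X)\ar[u]_{\Sigma}\ar[r]^-{f_{\ast}} & \pi_n(Y)\ar[u]_{\cong\, j_{p\ast}}\ar[ur]_{i_{\ast}} &
}
\]
with exact top row and reads off the result; your write-up simply unpacks the diagram chase and the commutativities (via Lemma~\ref{lem: partial calculate1} and $\mu_p\circ j_p\simeq i$) that the paper leaves implicit. You also flag and handle the passage from the hypothesis on $\pi_n(J_2(M_f,X))$ to the needed isomorphism on $\pi_n(F_p)$, which the paper silently assumes; your justification (``in every situation in which the lemma is invoked'') is informal but accurate, since in all applications one has $n\le m_1+m_2-1$ so that $J_2(M_f,X)\hookrightarrow J(M_f,X)$ is an $n$-equivalence.
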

\begin{proof}
	The lemma is easily obtained by the following commutative diagram, where the top row is exact
		\begin{align*} \small{\xymatrix{
				\pi_{n+1}( \Sigma X)\ar[r]^-{ \partial_{\ast}}&\pi_{n}(F_{p})\ar[r]&\pi_{n}(C_f).\\
				\pi_{n}(X)\ar[r]^-{f_{\ast}}\ar[u]_{\Sigma } &\pi_{n}(Y)\ar[ur]_{i_{\ast}}\ar[u]_{\cong j_{p\ast}} }}  
	\end{align*}
\end{proof}

\begin{lemma}\label{lem:exact m1+m2-1}
	Let $X\xrightarrow{f} Y \xrightarrow{i}C_f \xrightarrow{p} \Sigma X\xrightarrow{-\Sigma f} \Sigma Y $be a cofibration sequence. $X$ and  $ Y$ are $m_1-1$ and $m_2-1$ connected suspended spaces respectively, where $m_1\geq m_2$. Let $n\leq m_1+m_2-1$. Then we have the exact sequence 
	\begin{align}
		\pi_{n+1}(X)\xrightarrow{\partial_{\ast}}\pi_{n}(F_p) \xrightarrow{} \pi_{n}(C_f) \xrightarrow{p_{\ast}} \pi_{n}(\Sigma X) \xrightarrow{ (\Sigma f)_{\ast}}  \pi_{n}(\Sigma Y).  \label{exact Cf1}
	\end{align}
	Moreover if $j_{p\ast}: \pi_{n}(Y)\rightarrow \pi_{n}(J_{2}(M_f, X))$ is isomorphic or especially  $[id_Y, f]=0$, then there is the following exact sequence 
	\begin{align}
		\pi_{n}(X)\xrightarrow{f_{\ast}}\pi_{n}(Y) \xrightarrow{i_{\ast}}\pi_{n}(C_f) \xrightarrow{p_{\ast}} \pi_{n}(\Sigma X) \xrightarrow{ (\Sigma f)_{\ast}}  \pi_{n}(\Sigma Y).  \label{exact Cf}
	\end{align}
\end{lemma}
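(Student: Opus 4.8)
The plan is to read both exact sequences off the homotopy long exact sequence of the fibration $\Omega\Sigma X\xrightarrow{\partial}F_p\xrightarrow{\mu_p}C_f\xrightarrow{p}\Sigma X$, repaired at its two ends with input from the relative James construction.

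First, with no extra hypothesis, the homotopy exact sequence of this fibration already gives the exactness of \eqref{exact Cf1} at $\pi_n(F_p)$ and at $\pi_n(C_f)$ (and identifies the leftmost group with $\pi_{n+1}(\Sigma X)\cong\pi_n(\Omega\Sigma X)$, the first arrow being the connecting homomorphism $\partial_\ast$). For exactness of \eqref{exact Cf1} at $\pi_n(\Sigma X)$, the inclusion $\operatorname{Im}p_\ast\subseteq\ker(\Sigma f)_\ast$ is immediate from the cofibre sequence $C_f\xrightarrow{p}\Sigma X\xrightarrow{-\Sigma f}\Sigma Y$, and for the reverse inclusion I would apply Lemma \ref{stable exact seq} to $p\colon C_f\to\Sigma X$, whose cofibre is $\Sigma Y$: since $C_f$ is $(m_2-1)$-connected and $\Sigma Y$ is $m_2$-connected, that lemma makes $\pi_k(C_f)\xrightarrow{p_\ast}\pi_k(\Sigma X)\to\pi_k(\Sigma Y)$ exact for $k\le 2m_2-1$, the second arrow being $-(\Sigma f)_\ast$ by the Puppe sequence; as $m_2\le m_1$ this covers the generic part of the stated range. (One can see the mechanism directly: $\Sigma X$ is the cofibre of $i\colon Y\to C_f$ with pinch map $\pm\Sigma f$, so Lemma \ref{lemma Gray}(i) for the pair $(M_i,Y)$ gives $J(M_i,Y)\simeq F_{\Sigma f}$ and exhibits $p$ as $C_f=J_1(M_i,Y)\hookrightarrow F_{\Sigma f}\to\Sigma X$ with the first map $(2m_2-1)$-connected; the long exact sequence of $F_{\Sigma f}\to\Sigma X\xrightarrow{\Sigma f}\Sigma Y$ then identifies $\ker(\Sigma f)_\ast$ with $\operatorname{Im}p_\ast$ in that range.)

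For the refined sequence \eqref{exact Cf} it remains to replace the segment $\to\pi_n(F_p)\xrightarrow{\mu_{p\ast}}\pi_n(C_f)$ of \eqref{exact Cf1} by $\pi_n(X)\xrightarrow{f_\ast}\pi_n(Y)\xrightarrow{i_\ast}\pi_n(C_f)$, and for this I would check the hypotheses of Lemma \ref{lem:exact surj iso}: the suspension $\Sigma\colon\pi_n(X)\to\pi_{n+1}(\Sigma X)$ is surjective by Freudenthal's theorem, since $X$ is $(m_1-1)$-connected and $n\le m_1+m_2-1\le 2m_1-1$; and $j_{p\ast}\colon\pi_n(Y)\to\pi_n(J_2(M_f,X))$ is an isomorphism by hypothesis — which in particular holds when $[\operatorname{id}_Y,f]=0$, because Lemma \ref{lemma Gray}(iii) then identifies $J_2(M_f,X)$ with $Y\vee(Y\wedge X)$, making $j_{p\ast}$ a split monomorphism which is onto since $Y\wedge X$ is $(m_1+m_2-1)$-connected. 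Hence Lemma \ref{lem:exact surj iso} yields exactness of \eqref{exact Cf} at $\pi_n(Y)$. Exactness at $\pi_n(C_f)$ then follows from that of \eqref{exact Cf1} there, via $i=\mu_p\circ j_p$ and the fact that $j_{p\ast}\colon\pi_n(Y)\to\pi_n(F_p)$ is an isomorphism — indeed $\pi_n(J_2(M_f,X))\to\pi_n(F_p)$ is one in this range, since $J(M_f,X)/J_2(M_f,X)$ is $(2m_1+m_2-1)$-connected by the splitting in Lemma \ref{lemma Gray}(ii) — so that $\operatorname{Im}i_\ast=\operatorname{Im}\mu_{p\ast}=\ker p_\ast$; and exactness at $\pi_n(\Sigma X)$ is the same statement as in \eqref{exact Cf1}.

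The main obstacle is arranging that the two repaired ends are valid simultaneously up to the single degree $n=m_1+m_2-1$. The surjectivity of $j_{p\ast}$ onto $\pi_n(J_2(M_f,X))$ and onto $\pi_n(F_p)$, and the stable exactness of the $p$/$\Sigma f$ cofibre sequence (equivalently the $\pi_n$-surjectivity of the lift $C_f\to F_{\Sigma f}$), each carry their own Freudenthal-type bound, and the borderline degree $n=m_1+m_2-1$ is where one cannot rely on a generic estimate but must use that $X$ and $Y$ are highly connected suspensions together with the stated hypothesis on $j_{p\ast}$ (respectively the vanishing of $[\operatorname{id}_Y,f]$).
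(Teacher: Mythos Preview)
Your approach matches the paper's: for (\ref{exact Cf1}) the paper simply invokes Lemma~\ref{stable exact seq} (the fibration long exact sequence supplying the remaining exactness), and for (\ref{exact Cf}) it uses Lemma~\ref{lemma Gray}(iii), Freudenthal, and Lemma~\ref{lem:exact surj iso} exactly as you do. Your write-up is in fact more careful than the paper's about exactness at $\pi_n(C_f)$ in (\ref{exact Cf}) (via $i=\mu_p\circ j_p$ and $j_{p\ast}\colon\pi_n(Y)\to\pi_n(F_p)$ being an isomorphism); the range worry you raise at the end --- that Lemma~\ref{stable exact seq} applied to $p\colon C_f\to\Sigma X$ only reaches $n\le 2m_2-1$ --- is legitimate and applies equally to the paper's own argument, but every application of this lemma in the paper has $m_1=m_2$, so the issue never bites.
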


\begin{proof}
	The sequence (\ref{exact Cf1}) is deduced from the lemma \ref{stable exact seq} (Note that $Ker (\Sigma f)_{\ast}=Ker (-\Sigma f)_{\ast}$, so we replace $(-\Sigma f)_{\ast}$ by  $(\Sigma f)_{\ast}$  in the sequence).  
	
	Since $[id_Y, f]=0$, from Lemma \ref{lemma Gray}, $J_{2}(M_f, X)\simeq Y\vee (Y\wedge X)$, which implies that $j_{p\ast}: \pi_{n}(Y)\rightarrow \pi_{n}(J_{2}(M_f, X))$ is isomorphic for $n\leq m_1+m_2-1$. Moreover, the suspension homomorphism $\Sigma :\pi_{n}(X)\rightarrow \pi_{n+1}(\Sigma X)$  is surjective from the Freudenthal Suspension theorem. Thus the sequence (\ref{exact Cf}) is also exact on the place $\pi_{n}(Y)$ by the Lemma \ref{lem:exact surj iso}. 
\end{proof}

\begin{lemma}\label{lem: Suspen Cf}
	Let $X\xrightarrow{f}Y\stackrel{j_f}\hookrightarrow C_f\xrightarrow{p}\Sigma X$ be a cofibration sequence of CW complexes. Consider the fibration sequence  $\Omega\Sigma^{k+1}X\xrightarrow{\partial^k}F_{\Sigma^kp} \xrightarrow{i_k}\Sigma^k C_f\xrightarrow{\Sigma^kp}\Sigma^{k+1}X$.
Assume that the following restriction of $\Sigma^k: \pi_{n}(\Sigma X)\rightarrow \pi_{n+k}(\Sigma^{k+1}X)$ is isomorphic
	\begin{align}
		 Ker(\partial_{n\!-\!1\ast}\!\!:\!\pi_{n}(\Sigma X)\!\rightarrow \!\pi_{n\!-\!1}(F_p))\xrightarrow{\Sigma^k}  Ker(\partial^k_{n\!+\!k\!-\!1\ast}\!\!:\!\pi_{n+k}(\Sigma X)\!\rightarrow \pi_{n\!+\!k-\!1}(F_{\Sigma^kp})) \label{map:restrictKer}
	\end{align}
If $0\rightarrow Coker \partial_{n\ast}\rightarrow \pi_{n}(C_f)\rightarrow  Ker \partial_{n-1\ast} \rightarrow 0$
	is split, then so is the short exact sequence
\begin{align*}
	0\rightarrow Coker \partial^k_{n+k\ast}\rightarrow \pi_{n}(\Sigma^k C_f)\rightarrow  Ker \partial^k_{n+k-1\ast}\rightarrow 0.
\end{align*}
\end{lemma}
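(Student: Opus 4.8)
The plan is to exhibit a morphism of long exact homotopy sequences from the one attached to $F_p\to C_f\xrightarrow{p}\Sigma X$ to the one attached to $F_{\Sigma^kp}\to\Sigma^kC_f\xrightarrow{\Sigma^kp}\Sigma^{k+1}X$, in which every vertical arrow is the $k$-fold suspension homomorphism, and then to push the given splitting across the induced morphism of short exact sequences.

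To produce this morphism of long exact sequences I would use the unit $e_k\colon Z\to\Omega^k\Sigma^kZ$ of the adjunction $(\Sigma^k,\Omega^k)$. Its naturality makes the square with horizontal arrows $p$ and $\Omega^k\Sigma^kp$ and vertical arrows $e_k$ (on $C_f$ and on $\Sigma X$) commute; taking homotopy fibres of the horizontal arrows — and using that the homotopy fibre of a $k$-fold looped map is naturally the $k$-fold loop of its homotopy fibre, compatibly with connecting maps — yields an induced map $F_p\to\Omega^kF_{\Sigma^kp}$ and, altogether, a map from the fibration sequence $\Omega\Sigma X\xrightarrow{\partial}F_p\to C_f\xrightarrow{p}\Sigma X$ to $\Omega^k$ of the $\Sigma^kp$-fibration sequence. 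Passing to homotopy groups and invoking the two standard facts that $\pi_m(\Omega^kW)\cong\pi_{m+k}(W)$ and that $(e_k)_\ast\colon\pi_m(Z)\to\pi_m(\Omega^k\Sigma^kZ)=\pi_{m+k}(\Sigma^kZ)$ is precisely the iterated suspension $\Sigma^k$, one obtains a commutative ladder whose rows are the long exact sequences of $p$ and of $\Sigma^kp$ and all of whose verticals are $\Sigma^k$; in particular $\partial_{n\ast}$ is carried to $\partial^k_{n+k\ast}$ and $\partial_{n-1\ast}$ to $\partial^k_{n+k-1\ast}$.

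Restricting the ladder to the four terms around $\pi_n(C_f)$, respectively around the corresponding term of the $\Sigma^kp$-sequence, gives a commutative diagram whose rows are the two short exact sequences appearing in the statement and whose right-hand vertical map $\operatorname{Ker}\partial_{n-1\ast}\to\operatorname{Ker}\partial^k_{n+k-1\ast}$ is, by construction, the map (\ref{map:restrictKer}) — an isomorphism by hypothesis. The conclusion is then the elementary fact that a map of short exact sequences which is an isomorphism on the right-hand terms sends splittings to splittings: if $s_0$ splits the surjection $q_0\colon\pi_n(C_f)\to\operatorname{Ker}\partial_{n-1\ast}$, put $s_k:=\Sigma^k\circ s_0\circ(\Sigma^k)^{-1}$, where $(\Sigma^k)^{-1}$ is the inverse of the right-hand isomorphism $\operatorname{Ker}\partial_{n-1\ast}\xrightarrow{\sim}\operatorname{Ker}\partial^k_{n+k-1\ast}$; commutativity of the middle square gives $q_k\circ s_k=\Sigma^k\circ q_0\circ s_0\circ(\Sigma^k)^{-1}=\mathrm{id}$, so $s_k$ splits $q_k$ and the lower sequence splits. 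Note that neither the $\operatorname{Coker}$ terms nor the left-hand vertical arrow enter, which is exactly why the hypothesis only needs to control (\ref{map:restrictKer}).

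The one genuinely delicate step is the first: one must pin down the natural equivalence ``homotopy fibre of $\Omega^k(-)$ versus $\Omega^k$ of the homotopy fibre'' precisely enough that $\Omega^k\partial^k$ is identified with the connecting map of the looped fibration and that, after passing to homotopy groups, the verticals of the ladder are literally $\Sigma^k$ and not merely some comparison maps; this is the single place where (\ref{map:restrictKer}) is used, namely to invert the right-hand column. Everything downstream of the ladder is a two-line diagram chase.
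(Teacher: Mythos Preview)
Your proof is correct and follows essentially the same route as the paper: both use the unit $e_k\colon Z\to\Omega^k\Sigma^kZ$ of the suspension--loop adjunction to relate the $p$-fibration to the $\Sigma^kp$-fibration, obtain a commuting square of short exact sequences whose right vertical is the hypothesised isomorphism (\ref{map:restrictKer}), and then transport the section as $s_k=\Sigma^k\circ s_0\circ(\Sigma^k)^{-1}$. Your write-up is in fact more careful than the paper's about the passage to homotopy fibres and the identification of the connecting maps; the paper simply draws the square on $C_f$ and $\Sigma X$ and asserts the resulting diagram of short exact sequences.
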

\begin{proof}
	By the proof of Lemma 3.8 in \cite{ZhuJin}, the homomorphism $\Sigma^k$ in (\ref{map:restrictKer}) is well-defined. 
	There is the following homotopy commutative square with $\Omega^k\Sigma^k$ representing the $k$-fold canonical inclusions
	\begin{align*}
		\footnotesize{\xymatrix{
			C_f \ar[r]^-{p}\ar@{^{(}->}[d]_{\Omega^k\Sigma^k } & \Sigma X \ar@{^{(}->}[d]^{\Omega^k\Sigma^k}\\
			\Omega^k\Sigma^k C_f \ar[r]^-{\Omega^k\Sigma^kp} & \Omega^k\Sigma^{k+1} X } }
	\end{align*}
Thus we have the following short exact sequence with a commutative square 
	$$\xymatrix{
& &\pi_{n}(C_f)\ar[r]^-{p_{\ast}} \ar[d]^{\Sigma^k}& Ker \partial_{n\!-\!1\ast}\ar@/^/[l]^-{\sigma} \ar[r]\ar[d]_{\cong \Sigma^k}& 0 \\
	0\ar[r] & Coker \partial^k_{n+k\ast} \ar[r] &\pi_{n+k}(\Sigma^{n+k}C_f)\ar[r]^-{(\Sigma^kp)_{\ast}} & Ker \partial^k_{n+k-1\ast} \ar[r]& 0 .\\
}$$
Since there is a section $\sigma$  of $p_{\ast}$ in the top sequence, the bottom short exact sequence is also split with a section $(\Sigma^k)\sigma (\Sigma^k)^{-1}$ of $(\Sigma^kp)_{\ast}$.
\end{proof}

For an abelian group $G$ generated by $x_1,\cdots,x_n$, denote $G\cong C_1\{x_1\} \oplus\cdots\oplus C_t\{x_t\} $ if $x_i$ is a generator of the cyclic direct summand $C_i$, $i=1,\cdots,t$;  
$\lr{u_1,\cdots, u_l}$ the subgroup generalized by the elements $u_1,\cdots, u_l\in G$.
Specifically, denote $\Z\{x_1,\cdots, x_t\}$ the $\Z\{x_1\}\oplus\cdots\oplus \Z\{x_t\}$. For an element $g\in G$, we abuse the symbol ``$g$" as the equivalent class of $g$ in the quotient group $G/H$ for some subgroup $H$ of $G$.

The following generators of homotopy groups of spheres after localization at 2  come from \cite{Toda}.
$\iota_n\in\pi_n(S^n)$; $\pi_{n+1}(S^n)=\Z_2\{\eta_n\} (n\geq 3)$;
$\pi_{n+2}(S^n)=\Z_2\{\eta^2_n\} (n\geq 3)$; $\pi_{n+3}(S^n)=\Z_8\{\nu_n\} (n\geq 5)$; $\pi_{6}(S^3)=\Z_4\{\nu'\}$;  $\pi_{7}(S^4)=\Z_4\{\Sigma\nu'\}\oplus \Z_{(2)}\{\nu_4\}$; $\pi_{7}(S^3)=\Z_2\{\nu'\eta_6\}$; $\pi_{8}(S^4)=\Z_2\{\Sigma \nu'\eta_7\}\oplus\Z_2\{\nu_4\eta_7\}$; $\pi_{9}(S^5)=\Z_2\{\nu_5\eta_8\}$, where $\eta^2_n=\eta_n\eta_{n+1}$ (similarly, $\eta^3_n=\eta_n\eta_{n+1}\eta_{n+2}$).

We denote  $2^r=0$ when $r=\infty$, i.e, $\Z_{2^{\infty}}=\Z_{0}=\Z_{(2)}$ (after 2-localization).

\section{Elementary Moore spaces}
\label{sec: Moore space}
In this section we calculate homotopy groups of elementary Moore spaces appeared in the Theorem \ref{MainThm}.

 There is a canonical cofibration sequence
\begin{align}
	S^k\xrightarrow{2^{r}\iota_k} S^{k}\xrightarrow{i_{k}} M_{2^r}^{k}\xrightarrow{p_{k}}  S^{k+1} \label{Cofiberation for Mr^k}
\end{align}
We set $M_{2^{\infty}}^{k}=M_{0}^{k}=S^{k}\vee S^{k+1}$ for $r=\infty$.

Let $\Omega S^{k+1}\xrightarrow{\partial}  F_{p_{k}}\rightarrow M_{2^r}^{k}\xrightarrow{p_{k}}  S^{k+1}$ be the homotpy fibration sequence which implies the following   exact squence 
\begin{align}
   \pi_{m+1}(S^{k+1})\xrightarrow{\partial_{m\ast}}\pi_{m}(F_{p_{k}})\rightarrow\pi_{m}(\m{r}{k})\xrightarrow{p_{k\ast}}\pi_{m}(S^{k+1})\xrightarrow{\partial_{m-1\ast}}\pi_{m-1}(F_{p_{k}})
\end{align}

 we get $\Sigma F_{p_{k}}\simeq S^{k+1}\vee S^{2k+1}\vee\dots$ and the $(3k-1)$-skeleton  $Sk_{3k-1}(F_{p_{k}})\simeq S^k\cup_{2^k[\iota_k,\iota_k]}CS^{2k-1}$.

\begin{lemma}\label{pin+3M2r} $\pi_{n+3}( M_{2^r}^{n+1})$,  $\pi_{n+3}( M_{2^r}^{n}) (n\geq 4)$  are listed as 
	\begin{enumerate}[(1)]
		\item \label{pin+3Mn+1} $\pi_{n+3}( M_{2^r}^{n+1})\cong \left\{
		\begin{array}{ll}
		    \Z_4, & \hbox{$r=1$;} \\
			\Z_2 \oplus\Z_2, & \hbox{$r\geq 2$}
		\end{array}
		\right. (n\geq 4)$
		\item \label{pi7M4} $\pi_{7}(M_{2^r}^4) \cong  \Z_{2^{min\{2,r-1\}}}\oplus\Z_{2^{r+1}}\oplus\Z_2$
		\item \label{pin+3Mn} $\pi_{n+3}(M_{2^r}^n)\cong \Z_2\oplus \Z_{2^{min\{3,r\} }} (n\geq 5).$
\end{enumerate}
\end{lemma}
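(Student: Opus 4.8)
The plan is to compute each of the three groups by running the fibration exact sequence attached to the pinch map $p_k \colon M_{2^r}^k \to S^{k+1}$, for $k = n+1$ and $k = n$, in the range $m = n+3$. The key structural input is that the homotopy fibre $F_{p_k}$ has $(3k-1)$-skeleton $S^k \cup_{2^k[\iota_k,\iota_k]} CS^{2k-1}$, so for the dimensions we care about we may replace $\pi_m(F_{p_k})$ by the homotopy groups of this two-cell complex. For part (1), with $k=n+1\geq 5$, we have $2^k[\iota_k,\iota_k] = 2^k \cdot 2\nu_k = 2^{k+1}\nu_k$ after 2-localization (using $[\iota_k,\iota_k] = 2\nu_k$ in the stable range, since $n\geq 4$ makes everything stable in this degree range), so $F_{p_{n+1}}$ through the relevant skeleton is $S^{n+1}\cup_{2^{n+1}\nu_{n+1}} CS^{n+3}$, i.e. essentially trivial attaching for $r\geq $ small, and one reads off $\pi_{n+3}$ of it, then feeds it into $\pi_{n+4}(S^{n+2}) \xrightarrow{\partial} \pi_{n+3}(F_{p_{n+1}}) \to \pi_{n+3}(M_{2^r}^{n+1}) \xrightarrow{p_\ast} \pi_{n+3}(S^{n+2}) \xrightarrow{\partial} \pi_{n+2}(F_{p_{n+1}})$. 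The boundary map $\partial$ is controlled by Lemma \ref{lem: partial calculate1}: it factors through the inclusion $\Omega\Sigma X \to F_p$ composed with the connecting behavior, so $\partial_\ast$ on the relevant classes is computed via composition with $2^r\iota$ and Whitehead products. The $r=1$ versus $r\geq 2$ dichotomy comes from whether a certain extension is nontrivial, detected by an $\eta$-multiplication or a Toda bracket; I expect $\pi_{n+3}(M_{2}^{n+1}) = \Z_4$ to arise from a nonsplit extension $0 \to \Z_2 \to \Z_4 \to \Z_2 \to 0$ where the generator of the quotient lifts to an element of order $4$ because of the relation $2\iota \cdot (\text{something}) = \eta^2$ type phenomenon in the mod-$2$ Moore space.

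For parts (2) and (3) I would use the cofibration $S^{k} \xrightarrow{2^r\iota_k} S^k \to M_{2^r}^k \to S^{k+1}$ with $k=n$, and again the fibration sequence, but now the low-dimensional geometry of $F_{p_n}$ matters more delicately because $2^n[\iota_n,\iota_n]$ is genuinely unstable for $n=4$. For $n=4$: $[\iota_4,\iota_4] \in \pi_7(S^4)$ equals $\Sigma\nu' + 2\nu_4$ (or $-\Sigma\nu' \pm 2\nu_4$, up to sign and the precise Toda convention), which has infinite order, so $2^4[\iota_4,\iota_4]$ has infinite order, and $Sk_{11}(F_{p_4}) = S^4 \cup CS^7$ with this attaching map. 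One then needs $\pi_7$ of this complex: the cofibration $S^7 \to S^4 \to (\text{complex})$ gives $\pi_7(S^7) \xrightarrow{(2^4[\iota_4,\iota_4])_\ast} \pi_7(S^4) \to \pi_7(\text{complex}) \to \pi_6(S^7) = 0$ is not quite right since the attaching map goes the wrong way; rather I use the cofibre sequence $S^7 \xrightarrow{2^4[\iota_4,\iota_4]} S^4 \to F_{p_4}^{(11)} \to S^8$ and the exact sequence $\pi_8(S^8) \to \pi_7(S^4)/\mathrm{im} \to \pi_7(F_{p_4}) \to 0$ together with the kernel/cokernel analysis. The upshot for $\pi_7(M_{2^r}^4)$ should combine: a $\Z_2$ from $\eta$-type classes in the image of $\pi_7(S^4)$, a $\Z_{2^{r+1}}$ coming from $\pi_7(S^5) = \Z_{2^{r+\cdots}}$-interaction through $p_\ast$ with the $2^r$-degree map (this is the analogue of the $\pi_6(M_{2^r}^3)$ computation, where the $\Z_{2^{r+1}}$ appears from $\nu$ and its multiples surviving), and a $\Z_{2^{\min\{2,r-1\}}}$ from the cokernel of $\partial$ hitting the $\Sigma\nu'$-summand — the $r-1$ shift being exactly the effect of composing $2^r\iota$ with the $\nu'$-class which has order $4$.

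The main obstacle I anticipate is twofold. First, pinning down the boundary homomorphism $\partial_\ast \colon \pi_{m+1}(S^{k+1}) \to \pi_m(F_{p_k})$ precisely on generators — this requires identifying $\partial$ with (the adjoint of) a specific composite and then computing compositions like $2^r\iota_{k+1}$ followed by the Hopf classes $\nu, \eta^2$, which is where Toda's relations (the $2\nu_n = [\iota_n,\iota_n]$ stably, $\eta^3_n$, the $\nu'$ relations) all get used, and signs/brackets must be handled carefully. Second, and harder for $n=4$, is resolving the extension problems: once the exact sequence gives a filtration of $\pi_7(M_{2^r}^4)$ with known subquotients $\Z_2$, $\Z_{2^{r+1}}$, $\Z_{2^{\min\{2,r-1\}}}$, one must show the extensions split (Table 2 asserts a direct sum). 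For this I would invoke Lemma \ref{lem: Suspen Cf} to reduce splitting of $\pi_{n+3}(M_{2^r}^n)$ for $n\geq 5$ to a stable statement and handle $n=4$ by a direct argument — exhibiting explicit order-$2^{r+1}$ and order-$4$ (or order-$2$) elements, typically by lifting the generator of the $S^{n+1}$-bottom-cell's homotopy along a retraction or by a Toda-bracket indeterminacy argument showing no nontrivial Massey-type obstruction survives. The case $n\geq 5$ (part (3)) should be the easiest since everything is stable: $\pi_{n+3}(M_{2^r}^n)$ then follows from the stable exact sequence $\pi_{n+3}^s(S^0) \xrightarrow{2^r} \pi_{n+3}^s(S^0) \to \pi_{n+3}(M_{2^r}^n) \to \pi_{n+2}^s(S^0) \xrightarrow{2^r} \pi_{n+2}^s(S^0)$ with $\pi_3^s = \Z_{24} \sim \Z_8$ (2-locally) and $\pi_2^s = \Z_2$, giving $\Z_{2^{\min\{3,r\}}} \oplus \Z_2$ after checking the extension is trivial, which it is because the $\Z_2 = \pi_2^s$ is generated by $\eta^2$ and the connecting map $\pi_2^s \to \pi_3^s/2^r$ vanishes (as $\eta^2$ lifts).
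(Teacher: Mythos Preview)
Your overall framework (fibration of the pinch map, skeleton of $F_{p_k}$, then exact sequence plus extension analysis) is the right one, and for part~(3) with $r\geq 2$ it is essentially what the paper does. But there are two genuine gaps.

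First, a technical error: you write ``$[\iota_k,\iota_k]=2\nu_k$ in the stable range'' for $k=n+1\geq 5$. This is dimensionally impossible --- $[\iota_k,\iota_k]\in\pi_{2k-1}(S^k)$ while $\nu_k\in\pi_{k+3}(S^k)$, and these coincide only for $k=4$. For $k=5$ one has $[\iota_5,\iota_5]=\nu_5\eta_8$ (Toda), and for larger odd $k$ the Whitehead square is governed by the Hopf invariant one problem, not by $\nu$. Relatedly, the attaching map of $Sk_{3k-1}(F_{p_k})$ is $2^{r}[\iota_k,\iota_k]$, not $2^{k}[\iota_k,\iota_k]$; your $2^{n+1}\nu_{n+1}$ is doubly wrong. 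Since your plan for part~(1) rests on this identification, it does not go through as written.

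Second, the extension arguments are too thin. For part~(3) you assert the sequence $0\to\Z_{2^{\min\{3,r\}}}\to\pi_{n+3}(M_{2^r}^n)\to\Z_2\to 0$ splits ``because $\eta^2$ lifts''; surjectivity of $p_\ast$ gives a lift, but not one of order~$2$. The paper's argument for $r\geq 2$ is sharper: it uses the commutative ladder induced by composition with $\eta\colon S^{n+3}\to S^{n+2}$ to transport the known splitting of $\pi_{n+2}(M_{2^r}^n)$ (valid only for $r\geq 2$) to a section of the bottom row. For $r=1$ this device fails because $\pi_{n+2}(M_2^n)\cong\Z_4$ is nonsplit, and the paper simply quotes Wu's computation. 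Your sketch does not distinguish these cases. Likewise for parts~(1) and~(2) the paper does not compute at all but cites Baues--Hennes and the authors' earlier paper; if you want to redo these from scratch you will need a correct model for $F_{p_k}$ and a real argument for the $\Z_4$ versus $\Z_2\oplus\Z_2$ extension in~(1) and for the threefold splitting in~(2), neither of which your outline supplies.
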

\begin{proof}~~

\textbullet \quad  (\ref{pin+3Mn+1}) and (\ref{pi7M4})  come from Section 2 of \cite{Bau n-1n+3} and Section 3.3 of \cite{Resultsfrom} respectively.
	
\textbullet \quad For (\ref{pin+3Mn}), if $r=1$, $\pi_{n+3}(M_{2}^n)$ is obtained from Lemma 5.2 and Theorem 5.11 of \cite{WJProjplane}.

If $r\geq 2$, $\pi_{n+3}(M_{2^r}^n), \pi_{n+2}(M_{2^r}^n)$ are 
stable for $n\geq 5$, so we have the following  exact  sequences for $k=n+2,n+3$
\begin{align*}
	\pi_{k}(S^n)\xrightarrow{(2^{r}\iota_n)_{\ast}}\pi_{k}(S^n) \xrightarrow{i_{n\ast}}\pi_{k}(M_{2^r}^n) \xrightarrow{p_{n\ast}}\pi_{k}(S^{n+1})\xrightarrow{(-2^{r}\iota_{n+1})_{\ast}}\pi_{k}(S^{n+1})
\end{align*}
So we have the following commutative diagram of short exact sequences which is induced by $\eta: S^{n+3}\rightarrow S^{n+2}$
$$\xymatrix{
	0\ar[r]& \Z_2\ar[r]\ar[d] & \pi_{n+2}(\m{r}{n})\ar[r]^-{p_{n\ast}} \ar[d]^{\eta^{\ast}}& \Z_2\ar@/^/[l]^-{\sigma} \ar[r]\ar@{=}[d]& 0 \\
	0\ar[r] & \Z_{2^{min\{3,r\}}} \ar[r] &\pi_{n+3}(\m{r}{n})\ar[r]^-{p_{n\ast}} & \Z_2 \ar[r]& 0 \\
}$$
It is known that the upper exact sequence splits for $r\geq 2$. If $\sigma$ is the section of $p_{n\ast}$ in the top sequence, then $\eta^{\ast}\sigma$ is the section of the  lower $p_{n\ast}$. Hence the bottom short exact sequence is also split, which implies (\ref{pin+3Mn}) of this lemma.
	
\end{proof}

Note that $\pi_{n+4}( M_{2^r}^{n+1})(n\geq 4)$ in Table 2 and  $\pi_{n+3}( M_{2^r}^{n})(n\geq 5)$ in Table 1 must be the same.  So for the homotopy groups of elementary Moore spaces in Table 2, we only need to calculate $\pi_{n+4}( M_{2^r}^{n}) (n\geq 4)$.
 
\begin{lemma}\label{pin+4M2r}  $\pi_{n+4}( M_{2^r}^{n}) (n\geq 4)$  are listed as 
	\begin{enumerate}[(1)]
		\item \label{pi8M4} $\pi_{8}(M_{2^r}^4)\cong
		\Z_{2^{min\{3,r\}}}\oplus\Z_{2}\oplus(1-\epsilon_r)\Z_2.$
		\item  \label{pi9M5} $\pi_{9}(M_{2^r}^5)\cong \Z_{2^{min\{3,r\}}}\oplus \Z_2.$
		\item \label{pin+4Mn} $\pi_{n+4}(M_{2^r}^n)\cong \Z_{2^{min\{3,r\}}}
		(n\geq 6).$
	\end{enumerate}
\end{lemma}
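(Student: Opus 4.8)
The plan is to handle all three cases through the cofibration (\ref{Cofiberation for Mr^k}), $S^n\xrightarrow{2^r\iota_n}S^n\xrightarrow{i_n}M_{2^r}^n\xrightarrow{p_n}S^{n+1}$, and its associated homotopy fibration $\Omega S^{n+1}\xrightarrow{\partial}F_{p_n}\to M_{2^r}^n\xrightarrow{p_n}S^{n+1}$. By Lemma \ref{lemma Gray}(i),(iii), $F_{p_n}\simeq J(M_{2^r\iota_n},S^n)$ carries a CW structure with cells only in dimensions $n,2n,3n,\dots$, the $2n$-cell being attached along the (generalized) Whitehead class $2^r[\iota_n,\iota_n]\in\pi_{2n-1}(S^n)$; so $\pi_{n+3}(F_{p_n})$ and $\pi_{n+4}(F_{p_n})$ are read off from $\pi_{*}(S^n)$ and this single attaching map. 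The fibration exact sequence then yields
\[\pi_{n+5}(S^{n+1})\xrightarrow{\ \partial\ }\pi_{n+4}(F_{p_n})\to\pi_{n+4}(M_{2^r}^n)\to\pi_{n+4}(S^{n+1})\xrightarrow{\ \partial\ }\pi_{n+3}(F_{p_n}),\]
equivalently a short exact sequence $0\to\operatorname{Coker}\partial\to\pi_{n+4}(M_{2^r}^n)\to\operatorname{Ker}\partial\to 0$. The connecting maps are evaluated with Lemma \ref{lem: partial calculate1} (which identifies $\partial$ on the bottom cell $S^n\hookrightarrow F_{p_n}$ with $j_p\circ(2^r\iota_n)$), together with the distributivity identity $(2^r\iota_n)\circ\alpha=2^r\alpha+\binom{2^r}{2}H(\alpha)\,[\iota_n,\iota_n]$ for $\alpha\in\pi_{2n-1}(S^n)$ and Toda's values $[\iota_4,\iota_4]=2\nu_4-\Sigma\nu'$, $[\iota_5,\iota_5]=\nu_5\eta_8$ (for $n\ge 6$ the classes in play are suspensions, so the Whitehead correction is absent).

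For the stable case (3), $n\ge 6$, this is short. Two-locally $\pi_{n+4}(S^n)=\pi^s_4=0$ and $\pi_{n+4}(S^{n+1})=\pi^s_3=\Z_8\{\nu_{n+1}\}$; the $2n$-cell of $F_{p_n}$ is invisible below dimension $n+5$, so $\pi_{n+4}(F_{p_n})=0$ and $\pi_{n+3}(F_{p_n})=\pi_{n+3}(S^n)=\Z_8\{\nu_n\}$; and $\partial\colon\Z_8\to\Z_8$ is multiplication by $2^r$ (no Whitehead term since $\nu_{n+1}=\Sigma\nu_n$). Hence $\pi_{n+4}(M_{2^r}^n)\cong\operatorname{Ker}(2^r\colon\Z_8\to\Z_8)=\Z_{2^{\min\{3,r\}}}$.

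For (2), $n=5$: since $[\iota_5,\iota_5]=\nu_5\eta_8$ has order $2$, $2^r[\iota_5,\iota_5]=0$, so $\pi_9(F_{p_5})=\pi_9(S^5)=\Z_2$, $\pi_8(F_{p_5})=\pi_8(S^5)=\Z_8$, and $\partial\colon\pi_9(S^6)=\Z_8\{\nu_6\}\to\Z_8$ is again $\times 2^r$; the sequence collapses to $0\to\Z_2\to\pi_9(M_{2^r}^5)\to\Z_{2^{\min\{3,r\}}}\to 0$. For (1), $n=4$, one has $\pi_8(F_{p_4})=\pi_8(S^4)=(\Z_2)^2$ (the attaching class composed with $\eta_7$ is $2$-locally trivial) and $\pi_7(F_{p_4})=\pi_7(S^4)/\langle 2^r[\iota_4,\iota_4]\rangle$, while the distributivity correction $\binom{2^r}{2}[\iota_4,\iota_4]$ behaves differently for $r=1$, $r=2$ and $r\ge 3$ — this is precisely what produces the $\epsilon_r$'s: $\partial\colon\pi_9(S^5)\to\pi_8(F_{p_4})$ is zero for $r\ge 2$ but injective with image $\langle\Sigma\nu'\eta_7\rangle$ for $r=1$, and $\partial\colon\pi_8(S^5)=\Z_8\to\pi_7(F_{p_4})$ annihilates $\Z_8$ for $r\ge 3$ but has image of order $2$ for $r=1,2$. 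Assembling, the short exact sequences are $0\to\Z_2\to\pi_8(M_2^4)\to\Z_2\to 0$, $0\to(\Z_2)^2\to\pi_8(M_4^4)\to\Z_4\to 0$, and $0\to(\Z_2)^2\to\pi_8(M_{2^r}^4)\to\Z_8\to 0$ for $r\ge 3$; in every case the order already matches the asserted group.

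The remaining, genuinely hard, point is to split these extensions — to get the direct sums rather than a single larger cyclic group (e.g. to rule out a $\Z_{16}$ in $\pi_8(M_{2^3}^4)$ or in $\pi_9(M_{2^3}^5)$). I would lift a generator of $\operatorname{Ker}\partial\subseteq\pi_{n+4}(S^{n+1})$ — essentially a suitable multiple of $\nu_{n+1}$ — to $\widetilde\nu\in\pi_{n+4}(M_{2^r}^n)$, and show that the obstruction to $\langle\widetilde\nu\rangle$ being a complement, namely the well-defined image of $|\widetilde\nu|\cdot\widetilde\nu$ in $\operatorname{Coker}\partial=i_{n*}\pi_{n+4}(S^n)$, vanishes; this is where Toda-bracket identities come in. Alternatively, and perhaps more cleanly, one can invoke Lemma \ref{lem: Suspen Cf}: check its hypothesis (\ref{map:restrictKer}) — that the iterated suspension restricts to an isomorphism on the relevant kernels of the $\partial$'s, which here reduces to $\pi_{n+4}(S^{n+1})$ being in the stable range for $n\ge 4$ — and then propagate the splitting upward from the smallest available instance $\pi_6(M_{2^r}^2)$ of \cite{ZhuJin} (or $\pi_7(M_{2^r}^3)$ of \cite{Resultsfrom}) through $\pi_8(M_{2^r}^4)$ and $\pi_9(M_{2^r}^5)$. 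I expect the $n=4$ case to be the main obstacle: the $8$-cell of $F_{p_4}$ is attached by the infinite-order class $2^r[\iota_4,\iota_4]$ mixing $\nu_4$ and $\Sigma\nu'$, so $\pi_7(F_{p_4})$, $\pi_8(F_{p_4})$ and the two connecting homomorphisms all require careful $2$-primary bookkeeping, and confirming that the extra $\Z_2$ summand occurs exactly when $r\ge 2$ is the delicate step.
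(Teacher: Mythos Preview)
Your overall framework --- the fibration $\Omega S^{n+1}\to F_{p_n}\to M_{2^r}^n\to S^{n+1}$, the skeleton of $F_{p_n}$ via the relative James construction, and the reduction to a short exact sequence in $\operatorname{Coker}\partial$ and $\operatorname{Ker}\partial$ --- is exactly the paper's. For $n\ge 6$ your argument is complete and matches; for $n=5$ the paper likewise invokes Lemma~\ref{lem: Suspen Cf} to propagate the splitting from $n=4$, just as you suggest. One minor slip: for $r=1$ the image of $\partial\colon\pi_8(S^5)\to\pi_7(F_{p_4})$ has order $4$, not $2$ (from~(\ref{partial7 on nu5}) one has $\partial_{7\ast}(\nu_5)=4j_{p_4}\nu_4-j_{p_4}\Sigma\nu'$, which has order $4$ in the group~(\ref{pi7Fp4})); this does not affect your kernel or the shape of the extension.

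The genuine gap is the $n=4$ splitting. Your Lemma~\ref{lem: Suspen Cf} route proposes to seed the induction at $n=2$ or $n=3$, but your own verification of hypothesis~(\ref{map:restrictKer}) only covers $n\ge 4$: the step $n=3\to n=4$ passes through the unstable group $\pi_7(S^4)\cong\Z_{(2)}\{\nu_4\}\oplus\Z_4\{\Sigma\nu'\}$, so the restriction of $\Sigma$ to $\operatorname{Ker}\partial$ is not an isomorphism in any evident way (and the extension for $\pi_7(M_{2^r}^3)$ has a different shape altogether). Your Toda-bracket alternative is stated but not executed. The paper closes the $n=4$ case by a different mechanism: it takes $\pi_8(M_2^4)\cong\Z_2\oplus\Z_2$ as input from Wu~\cite{WJProjplane} for $r=1$, and for $r\ge 2$ varies $r$ rather than $n$ --- a map $M_2^4\to M_{2^r}^4$ (diagram~(3) of~\cite{Mukai}) produces the ladder~(\ref{Diagram pi8(M2^1)to pi8(M2^r)}) of short exact sequences with \emph{injective} right vertical arrow $\Z_2\{4\nu_5\}\hookrightarrow\Z_{2^{\min\{3,r\}}}\{2^{u_r}\nu_5\}$, and an elementary criterion (Lemma~2.9 of~\cite{Resultsfrom}) then transfers the known splitting at $r=1$ to all $r\ge 2$. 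Once $n=4$ is settled this way, Lemma~\ref{lem: Suspen Cf} handles $n=5$ as you outlined.
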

\begin{proof}~~
	
\textbullet \quad For $M_{2^r}^{4}$, the 9-skeleton  $Sk_{9}(F_{p_{4}})\simeq S^4\bigcup_{\gamma}CS^7$ with $\gamma=2^{r}[\iota_{4},\iota_{4}]=2^{r+1}\nu_4-2^r\Sigma\nu'$, by Lemma \ref{lem: partial calculate1}, we have exact sequence with commutative squares
\begin{align} \small{\xymatrix{
			\pi_{9}( S^5)\ar[r]^-{ \partial_{8\ast}}&\pi_{8}(F_{p_4})\ar[r]&\pi_{8}(M_{2^r}^{4})\ar[r]^-{p_{4\ast}}&\pi_{8}(S^5)\ar[r]^-{ \partial_{7\ast}}& \pi_{7}(F_{p_4})\\
			\pi_{8}(S^4)\ar[r]^-{(2^r\iota_{4})_{\ast}}\ar[u]_{\Sigma } &\pi_{8}(S^4)\ar@{^{(}->}[u]_{ j_{p_4\ast}}&&\pi_{7}( S^4)\ar[r]^-{(2^r\iota_{4})_{\ast}}\ar[u]_{\Sigma } &\pi_{7}(S^4)\ar@{^{(}->}[u]_{j_{p_4\ast}} }}  \label{exact pi8(M^4)}
\end{align}
From the exact sequence of $\pi_{7}(F_{p_4})$ and  (3.16) in \cite{Resultsfrom},  we have
\begin{align}
	&\pi_{7}(F_{p_4})=\left\{
	\begin{array}{ll}
		\!\!\Z_2\{j_{p_4}\Sigma \nu'\!-\!2j_{p_4}\nu_4\} \oplus\Z_8\{j_{p_4}\nu_4\}, & \hbox{$r=1$;} \\
		\!\!\Z_4\{j_{p_4}\Sigma \nu'\} \oplus\Z_{2^{r+1}}\{j_{p_4}\nu_4\}, & \hbox{$r\geq 2$}
	\end{array}
	\right. \label{pi7Fp4}\\
	&\partial_{7\ast}(\nu_5)=2^{2r}j_{p_4}\nu_4-2^{r-1}(2^r-1)j_{p_4}\Sigma\nu'.
	\label{partial7 on nu5}
\end{align}

Thus 
\begin{align*}
Ker \partial_{7\ast}=\Z_{2^{min\{3,r\}}}\{2^{u_r}\nu_5\}, \text{where}~u_1=2, u_2=1, u_r=0 (r\geq 3).
\end{align*}

Consider the confibration sequence 
 $S^7\xrightarrow{\gamma}S^4\stackrel{j_{p_4}}\hookrightarrow Sk_{9}(F_{p_{4}})\xrightarrow{\check{p}_8} S^8$, we have the following fibration sequence 
 \begin{align*}
 	\Omega S^8\xrightarrow{\check{\partial}} F_{\check{p}_8}\rightarrow Sk_{9}(F_{p_{4}})\xrightarrow{\check{p}_8} S^8,  ~\text{where}~ S^4 \inclu{j_{\check{p}_8}} Sk_{9}(F_{\check{p}_8}) ~\text{is 10-connected}.
 \end{align*}
By Lemma \ref{lem: partial calculate1} and Lemma \ref{lem:exact surj iso}, there is the exact squence with commutative square
 \begin{align} \small{\xymatrix{
			\pi_{8}( S^7)\ar[r]^-{ \gamma_{\ast}}&\pi_{8}(S^4)\ar[r]^-{j_{p_{4}\ast}}&\pi_{8}(F_{p_4})\ar[r]^-{\check{p}_{8\ast}}&\pi_{8}(S^8)\ar[r]^-{ \check{\partial}_{7\ast}}& \pi_{7}(F_{\check{p}_{8\ast}})\\
			&&&\pi_{7}( S^7)\ar[r]^-{\gamma_{\ast}}\ar[u]_{\Sigma \cong  } &\pi_{7}(S^4)\ar[u]_{\cong j_{p_4\ast}} }}  \label{exact pi8(Fp4)}
\end{align}
It is easy to see that $\gamma_{\ast}=0$ and $\check{\partial}_{7\ast}$ is injective in (\ref{exact pi8(Fp4)}).
So
\begin{align*}
	\pi_{8}(S^4)\cong \pi_{8}(F_{p_4})=\Z_2\{j_{p_4}\Sigma \nu'\eta_7\} \oplus\Z_2\{j_{p_4}\nu_4\eta_7\}.
\end{align*}

By the left commutative square of (\ref{exact pi8(M^4)}) and Lemma A.1 of \cite{Resultsfrom}, 
\begin{align}
	\partial_{8\ast}(\nu_5\eta_8)&=\partial_{8\ast}(\Sigma\nu_4\eta_7)=j_{p_{4}\ast}[(2^r\iota_4)\nu_4\eta_7]=\left\{
	\begin{array}{ll}
		-j_{p_{4}}\Sigma\nu'\eta_7, & \hbox{$r=1$;} \\
		0, & \hbox{$r\geq 2$}
	\end{array}
	\right.
	\label{partial8 on  nu5}
\end{align}

Thus $Coker\partial_{8\ast}=\left\{
\begin{array}{ll}
	\Z_2\{j_{p_4}\nu_4\eta_7\}, & \hbox{$r=1$;} \\
	\Z_2\{j_{p_4}\Sigma \nu'\eta_7\} \oplus\Z_2\{j_{p_4}\nu_4\eta_7\}, & \hbox{$r\geq 2$}
\end{array}
\right..$

The diagram (3) of \cite{Mukai} induces the following  commutative diagram of short exact sequences for $r>1$ by Lemma \ref{J(X,A)toJ(X',A')}
\begin{align}
	\! \!\! \! \!\small{\xymatrix{
			0\ar[r]&\Z_2\{j_{p_4}\Sigma \nu'\eta_7\} \!\oplus\!\Z_2\{j_{p_4}\nu_4\eta_7\}  \ar[r]& \pi_{8}(M_{2^r}^4) \ar[r]&\Z_{2^{min\{3,r\}}}\{2^{u_r}\nu_5\}\ar[r]& 0\\
			0\ar[r]&\Z_2\{j_{p_4}\nu_4\eta_7\}\ar[u]\ar[r]& \pi_{8}(M_{2}^4)\ar[u]_{c_{5\ast}}\ar[r] &\Z_2\{4 \nu_5\} \ar@^{(->}[u]_{injective}\ar[r]& 0
	} }. \label{Diagram pi8(M2^1)to pi8(M2^r)}
\end{align}
Since $\pi_{8}(M_{2}^4)\cong \Z_2\oplus\Z_2$ \cite{WJProjplane} , Lemma 2.9 of \cite{Resultsfrom} implies 
\begin{align}
	\pi_{8}(M_{2^r}^4)\cong
	\Z_{2^{min\{3,r\}}}\oplus\Z_{2}\oplus \Z_2, r\geq 2. \label{pi8(Mr4)}
\end{align}
we complete the proof of  (\ref{pi8(Mr4)}).

\textbullet \quad For $M_{2^r}^5$, it easy to see $[\iota_5, 2^r\iota_5]=0$. By Lemma \ref{lem:exact m1+m2-1}, we have the following exact sequence 
\begin{align*}
		\pi_{9}(S^5)\xrightarrow{(2^{r}\iota_5)_{\ast}}\pi_{9}(S^5) \xrightarrow{i_{5\ast}}\pi_{9}(M_{2^r}^5) \xrightarrow{p_{5\ast}}\pi_{9}(S^{6})\xrightarrow{(2^{r}\iota_6)_{\ast}}\pi_{9}(S^{6})
\end{align*}

 we have short exact sequence with commutative square
\begin{align} \small{\xymatrix{
			0\ar[r]&\Z_2\{\nu_5\eta_8\}\ar[r]&\pi_{9}(M_{2^r}^{5})\ar[r]^-{p_{5\ast}}&\Z_{2^{min\{3,r\}}}\{2^{u_r}\nu_5\}\ar[r]& 0}}  \label{exact pi9(M^5)}
\end{align}
Clearly, $ \Z_{2^{min\{3,r\}}}\{2^{u_r}\nu_4\}\!\!=\!\!Ker(\partial_{7\ast}\!\!:\!\pi_{8}(S^5)\!\rightarrow\! \pi_{7}(F_{p_4}))\xrightarrow{\Sigma} Ker(\partial_{8\ast}\!\!: \!\pi_{9}(S^6)\!\rightarrow\! \pi_{8}(F_{p_5}))\!\!=\!\!\Z_{2^{min\{3,r\}}}\{2^{u_r}\nu_5\}$   is an isomorphism, by Lemma \ref{lem: Suspen Cf},  the short exact sequence (\ref{exact pi9(M^5)})  is split. Hence 
$\pi_{9}(M_{2^r}^5)\cong \Z_{2^{min\{3,r\}}}\oplus \Z_2$.
\end{proof}

\textbullet \quad For stable homotopy groups $\pi_{n+4}(M_{2^r}^n) (n\geq 6)$, we have the follwoing exact sequence 
\begin{align*}
	\pi_{n+4}(S^n)\xrightarrow{(2^{r}\iota_n)_{\ast}}\pi_{n+4}(S^n) \xrightarrow{i_{n\ast}}\pi_{9}(M_{2^r}^n) \xrightarrow{p_{n\ast}}\pi_{n+4}(S^{n+1})\xrightarrow{(2^{r}\iota_{n+1})_{\ast}}\pi_{n+4}(S^{n+1}),
\end{align*}
It is easy to get $\pi_{n+4}(M_{2^r}^n)\cong \Z_{2^{min\{3,r\}}} (n\geq 6)$.

\section{Elementary Chang-complexes }
\label{sec: Chang-complexes}
In this section we calculate homotopy groups of elementary Chang complexes appeared in the Theorem \ref{MainThm}.

In order to calculate homotopy groups in a unified and efficient way, in this section, by the rule $2^{\infty}=0$, one of the numbers $r,s$ in the space $C_{r}^{n+2,s}$,  which is a mapping cone of $f_{r}^{n+2,s}$, is allowed to be $\infty$. 
Since for $\infty\geq r,s\geq 1$, there is a  cofibration sequence
\begin{align}
	S^{n+1}\vee S^{n}\xrightarrow{f_{r}^{n+2,s}} S^{n+1}\vee S^{n}\xrightarrow{\lambda_{r}^{n+2,s}} \Cse \xrightarrow{q_{r}^{n+2,s}}  S^{n+2}\vee S^{n+1}
	\label{cofibre Cr^s n+2}
\end{align}
where  $f_{r}^{n+2,s}=(2^sj^{n+1}_1+ j^n_2\eta_n, 2^rj^n_2)$;  $\lambda_{r}^{n+2,s}$ and  $q_{r}^{n+2,s}$ are canonical inculsion and projection repectively.  For one of $s$ and $r$ is $\infty$, we have  
\begin{align*}
  C_{r}^{n+2,\infty}=C_{r}^{n+2}\vee S^{n+1} (r\geq 1); \qquad C_{\infty}^{n+2,s}=C^{n+2,s}\vee S^{n+1} (s\geq 1).
\end{align*}
Moreover,
 $f_{r}^{n+2,s}j^{n+1}_1=2^sj^{n+1}_1+ j^n_2\eta_n$; $f_{r}^{n+2,s}j^n_2=2^rj^n_2$.

Let $\Omega(S^{n+2}\vee S^{n+1})\xrightarrow{\partial_{r}^{n+2,s}} \F{n+2}\rightarrow \Cse \xrightarrow{q_{r}^{n+2,s}} S^{n+2}\vee S^{n+1}$   be the homotopy fiber sequence, where  $\F{n+2}$ denotes the homotopy fiber $F_{q_{r}^{n+2,s}}\simeq J( M_{f_{r}^{n+2,s}},  S^{n+1}\vee S^n)$.

By Lemma \ref{lemma Gray},
\begin{align}
	& Sk_{2n-1}(\F{n+2})\simeq J_1( M_{f_{r}^{n+2,s}}S^{n+1}\vee S^n) \simeq S^{n+1}
\vee S^n  \label{eq:J1Fn+2}\\
    & Sk_{3n-1}(\F{n+2})\simeq J_2( M_{f_{r}^{n+2,s}},  S^{n+1}\vee S^n) \nonumber\\
    &\simeq (S^{n+1}\vee S^n)\cup_{\gamma_{r}^{n+2,s}}C((S^{n+1}\vee S^n)\wedge (S^{n}\vee S^{n-1}))  \label{eq:J2Fn+2}
\end{align}
where $\gamma_{r}^{n+2,s}=[id_{S^{n+1}\vee S^n}, f_{r}^{n+2,s}]$.

Since $\gamma_{r}^{n+2,s}(j_2^{n}\wedge j_{2}^{n-1})=[id_{S^{n+1}\vee S^n}, f_{r}^{n+2,s}]\Sigma (j_2^{n-1}\wedge j_{2}^{n-1})\simeq [j_{2}^{n}, f_{r}^{n+2,s}j_{2}^{n}]$ $\simeq [j_{2}^{n}, 2^rj_{2}^{n}]\simeq 2^rj_{2}^{n}[\iota_{n}, \iota_{n}]$, we have 
\begin{align}
	Sk_{2n}(\F{n+2})\simeq (S^{n+1}\vee S^n)\cup_{2^rj_{2}^{n}[\iota_{n}, \iota_{n}]}CS^{2n-1}. \label{eq:Sk2nFn+2}
\end{align}
Similarly, by  $\gamma_{r}^{n+2,s}(j_1^{n+1}\wedge j_{2}^{n-1})=2^r[j_1^{n+1},j_2^n]$,  $\gamma_{r}^{n+2,s}(j_2^{n}\wedge j_{1}^{n})=2^s[j_1^{n+1},j_2^n]+ j_2^{n}[
\iota_n,\iota_n]\eta_{2n-1}$, we get 
\begin{align}
	Sk_{2n+1}(\F{n+2})\simeq (S^{n+1}\vee S^n)\cup_{\gamma_{2n+1,r}^{n+2,s}}C(S^{2n-1}\vee S^{2n}\vee S^{2n})  \label{eq:Sk2n+1Fn+2}
\end{align}
where the attaching map $\gamma_{2n+1,r}^{n+2,s}=(2^rj_{2}^{n}[\iota_{n}, \iota_{n}],~2^r[j_1^{n+1},j_2^n],~ 2^s[j_1^{n+1},j_2^n]\!+\!j_2^{n}[
\iota_n,\iota_n]\eta_{2n-1} )$.

Denote  the  $j^{n+2,s}_{r}$ the canonical  inclusion 
$j_{q_{r}^{n+2,s}}: S^{n+1}\vee S^n \hookrightarrow Sk_{m}(\F{n+2}) $ $(m\geq 2n)$ or  $S^{n+1}\vee S^n \hookrightarrow \F{n+2}$  without ambiguous.

Here we list the homotopy groups of some wegdes of spheres which will be used in the following content.
\begin{align}
	&\pi_{6}( S^5\vee S^4)=\Z_2\{j^5_1\eta_5\}\oplus\Z_2\{j^4_2\eta^2_4\};\nonumber\\
	&\pi_{7}( S^6\vee S^5)=\Z_2\{j^6_1\eta_6\}\oplus\Z_2\{j^5_2\eta^2_5\}; \nonumber\\
	&\pi_{7}( S^5\vee S^4)=\Z_2\{j^5_1\eta^2_5\}\!\oplus \!\Z_4\{j^4_2\Sigma\nu'\}  \oplus \Z_{(2)}\{j^4_2\nu_4\}; \nonumber\\
	&\pi_{8}( S^5\vee S^4)=\Z_8\{  j^5_1\nu_5\}\oplus \Z_2\{ j^4_2\nu_4\eta_{7}\}\oplus \Z_2\{ j^4_2\Sigma\nu'\eta_{7}\}\oplus \Z_{(2)}\{ [j_1,j_2]\};\nonumber\\
	&\pi_{8}( S^6\vee S^5)=\Z_2\{  j^6_1\eta^2_6\}\oplus \Z_8\{ j^5_2\nu_5\};\nonumber\\
	&\pi_{8}( S^7\vee S^6)=\Z_2\{j^7_1\eta_7\}\oplus\Z_2\{j^6_2\eta^2_6\}; \nonumber\\
	&\pi_{9}( S^6\vee S^5)=\Z_8\{j^6_1\nu_6\}\!\oplus \!\Z_2\{j^5_2\nu_5\eta_{8}\};\nonumber\\
	&\pi_{9}( S^7\vee S^6)=\Z_2\{j^7_1\eta^2_7\}\!\oplus \!\Z_8\{j^6_2\nu_6\}.\nonumber
\end{align}

\subsection{The $n+3$- homotopy groups of elementary Chang-complexes}
\label{n+3 Chang}
\qquad

\qquad

	$\pi_{7}(C_{\eta}^6)\cong \Z_2\oplus \Z_{(2)}$ and $\pi_{n+3}(C_{\eta}^{n+2})\cong \Z_4  (n\geq 5)$ come from Proposition  8.2 of  \cite{Mukai:pi(CPn)}.
\begin{lemma} \label{lem:pi7C6} Let $r,s\geq 1$
	\begin{align*}
	 &	\pi_{7}(C_{r}^{6})\cong
	\Z_2\oplus (1-\epsilon_r)\Z_{2}\oplus \Z_{2^{r+1}}\\
	&\pi_{7}(C^{6,s})\cong\Z_2\oplus \Z_{(2)}\oplus\Z_{2}\\
	& 	\pi_{7}(C_{r}^{6,s})\cong \Z_{2}\oplus\Z_{2}\oplus (1-\epsilon_r)\Z_{2}\oplus \Z_{2^{r+1}}.
	\end{align*}
\end{lemma}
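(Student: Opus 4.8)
My plan is to compute all three groups by the uniform scheme used in Sections 3--4. For a complex $C_f$ with defining cofibration $X\xrightarrow{f}Y\to C_f\xrightarrow{p}\Sigma X$, attach the homotopy fibration $\Omega(\Sigma X)\xrightarrow{\partial}F_p\to C_f\xrightarrow{p}\Sigma X$ with $F_p\simeq J(M_f,X)$ (Lemma \ref{lemma Gray}(i)); then $\pi_7(C_f)$ sits in
\[
\pi_8(\Sigma X)\xrightarrow{\partial_{7\ast}}\pi_7(F_p)\longrightarrow\pi_7(C_f)\xrightarrow{p_\ast}\pi_7(\Sigma X)\xrightarrow{\partial_{6\ast}}\pi_6(F_p),
\]
so $\pi_7(C_f)$ is an extension of $\operatorname{Ker}\partial_{6\ast}$ by $\operatorname{Coker}\partial_{7\ast}$. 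I take: for $C^{6,s}$, $X=S^5$, $Y=S^5\vee S^4$, $f=f^{6,s}$, $\Sigma X=S^6$; for $C_r^{6}$ (via $s=\infty$ in the Section 4 notation) $X=Y=S^5\vee S^4$, $f=f_r^{6,\infty}=(j^4_2\eta_4,2^rj^4_2)$, so $C_f=C_r^6\vee S^5$ and $\Sigma X=S^6\vee S^5$; for $C_r^{6,s}$, $X=Y=S^5\vee S^4$, $f=f_r^{6,s}$, $\Sigma X=S^6\vee S^5$. All groups $\pi_6,\pi_7,\pi_8$ of $S^6$, $S^6\vee S^5$, $S^5\vee S^4$ needed here are already listed in Section 4.

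The computation falls into three steps. First, identify $\pi_6(F_p),\pi_7(F_p)$: by Lemma \ref{lemma Gray}(ii),(iii) and \eqref{eq:Sk2nFn+2}--\eqref{eq:Sk2n+1Fn+2} the only cell of $F_p$ of dimension $\le8$ beyond $J_1=Y$ is, for $C_r^6$ and $C_r^{6,s}$, a single $8$-cell attached by $2^rj^4_2[\iota_4,\iota_4]$, and none for $C^{6,s}$; hence $\pi_6(F_p)\cong\pi_6(Y)$ always, $\pi_7(F_p)\cong\pi_7(S^5\vee S^4)/\langle 2^rj^4_2[\iota_4,\iota_4]\rangle$ for $C_r^6,C_r^{6,s}$ (i.e.\ the group $\pi_7(F_{p_4})$ of \eqref{pi7Fp4} together with a split $\Z_2\{j^5_1\eta^2_5\}$), and $\pi_7(F_p)\cong\pi_7(S^5\vee S^4)$ for $C^{6,s}$. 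Second, evaluate $\partial_{6\ast},\partial_{7\ast}$ on generators via Lemma \ref{lem: partial calculate1}, which gives $\partial_\ast(\Sigma\alpha)=j_{p\ast}f_\ast(\alpha)$; since the Freudenthal suspension is onto each relevant $\pi_6,\pi_7$ (from the Section 4 lists and $\Sigma^2\nu'=2\nu_5$), every generator of $\pi_7(\Sigma X),\pi_8(\Sigma X)$ is a suspension, and each $\partial_\ast$-value is $j_{p\ast}$ of a composite of $f$ with one of $\eta_4,\eta_4^2,\nu_4,\Sigma\nu'$. These composites are pinned down by the standard $2$-local data of \cite{Toda} ($\eta_4\eta_5=\eta_4^2$, $\eta_4^3=2\Sigma\nu'$, $2\eta_4^2=0$, $[\iota_4,\iota_4]=2\nu_4-\Sigma\nu'$, the post-compositions of $\nu_4,\Sigma\nu'$ by $\eta$), and --- crucially --- by the failure of left distributivity for post-composition with the Hopf map, $(2^r\iota_4)\circ\nu_4=2^{2r}\nu_4-2^{r-1}(2^r-1)\Sigma\nu'$, which is exactly the input already recorded in \eqref{partial7 on nu5}. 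Third, read off $\operatorname{Coker}\partial_{7\ast},\operatorname{Ker}\partial_{6\ast}$, and for $C_r^6,C_r^{6,s}$ discard the spurious $\pi_7(S^5)=\Z_2$ wedge summand.

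The genuinely nontrivial part, and where I expect the main obstacle to lie, is twofold: getting the Hilton--Hopf correction terms above exactly right (they are what keep $\partial_{7\ast}$ from hitting most of the torsion of $\pi_7(F_p)$, so that the $\Z_{2^{r+1}}$ --- and, for $C_r^{6,s}$, the separate $\Z_2$'s indexed by $r$ and $s$ --- survive), and then resolving the extension $0\to\operatorname{Coker}\partial_{7\ast}\to\pi_7(C_f)\to\operatorname{Ker}\partial_{6\ast}\to0$. For the latter I would use naturality exactly as in \eqref{Diagram pi8(M2^1)to pi8(M2^r)}: the map of cofibrations $C_\eta^6\to C_r^6$, the inclusion $C^{6,s}\hookrightarrow C_r^{6,s}$ coming from $f_r^{6,s}j^5_1=f^{6,s}$, and the mod-$2$ reduction maps $M_2^{\bullet}\to M_{2^r}^{\bullet}$ induce, via Lemma \ref{J(X,A)toJ(X',A')}, comparison diagrams between the exact sequences above whose fibre-level squares are computed by the smash maps $\mu\wedge\mu'$; combined with the split-extension criterion (Lemma 2.9 of \cite{Resultsfrom}) this reduces the general case to small cases --- $r=1$ or $s=1$, $\pi_7(C_\eta^6)$ of \cite{Mukai:pi(CPn)}, $\pi_7(C^{6,s})$ (treated just before), and $\pi_7(M_2^{\bullet})$ of \cite{WJProjplane} --- which are either small enough for a direct fibration computation or already in the literature. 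For $C_r^{6,s}$ I would moreover exploit the auxiliary cofibration $C^{6,s}\to C_r^{6,s}\to S^6$, whose extra $5$-cell is $2^r$ times the bottom-cell inclusion, to keep the $r$- and $s$-indexed parts of the answer cleanly separated. Once the fibration machinery of Sections 2--4 is in place the skeletal identifications and the exact sequences are routine; the substance is the composition bookkeeping and these naturality/extension arguments.
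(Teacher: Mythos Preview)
Your computational framework (steps 1--2) is essentially the paper's: the same fibration $\Omega(\Sigma X)\to F_p\to C_f\to\Sigma X$, the same identification $\pi_7(F_p)\cong\pi_7(S^5\vee S^4)/\langle 2^rj_2^4[\iota_4,\iota_4]\rangle$ via \eqref{eq:Sk2nFn+2}, and the same $\partial$-computations via Lemma~\ref{lem: partial calculate1} with the key relation $(2^r\iota_4)\nu_4=2^{2r}\nu_4-2^{r-1}(2^r-1)\Sigma\nu'$. Your separate treatment of $C^{6,s}$ with $X=S^5$ (where $Ker\,\partial_{6\ast}=0$, so no extension arises) is in fact slightly cleaner than the paper's deduction from $C^{6,s}_\infty=C^{6,s}\vee S^5$.

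The gap is in step 3, the extension for $C_r^{6,s}$ (and hence $C_r^6$). Your comparison maps $C_\eta^6\to C_r^6$ and $C^{6,s}\hookrightarrow C_r^{6,s}$ both come from $j_1^5:S^5\hookrightarrow S^5\vee S^4$ on the $X$-side, so on $\Sigma X$ they miss the $S^5$-summand; but $Ker\,(\partial_r^{6,s})_{6\ast}=\Z_2\{j_2^5\eta_5^2\}$ lives exactly there. Both sources therefore have $Ker\,\partial_{6\ast}=0$, the induced map of kernels is $0\to\Z_2$, and a trivially split source sequence says nothing about the target extension --- Lemma~2.9 of \cite{Resultsfrom} cannot be invoked. (Also, the cofibre of $C^{6,s}\hookrightarrow C_r^{6,s}$ is $S^5$, not $S^6$.) The paper's fix is the comparison you did not list: the map $\bar\theta:M_{2^r}^4\to C_r^{6,s}$ induced by $j_2^4$ on both domain and codomain of $f$ (diagram \eqref{diagramMr4 to Ce6}). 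This sends $Ker\,\partial_{6\ast}=\Z_2\{\eta_5^2\}$ for $M_{2^r}^4$ \emph{isomorphically} onto $\Z_2\{j_2^5\eta_5^2\}$, and since the short exact sequence for $\pi_7(M_{2^r}^4)$ is already known to split ((3.15) of \cite{Resultsfrom}), the one for $\pi_7(C_r^{6,s})$ splits as well.
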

\begin{proof}
	For $n=4$, 	by (\ref{eq:Sk2nFn+2}) and $[\iota_4,\iota_4]=\pm (2\nu_4-\Sigma \nu')$, 
	
	\begin{align}
	&\pi_7(\F{6})\!=\!\frac{\Z_2\{j_{r}^{6,s}j^5_1\eta^2_5\}\!\oplus\!\Z_{(2)}\{j_{r}^{6,s}j^4_2\nu_4\}\!\oplus\!\Z_4\{j_{r}^{6,s}j^4_2\Sigma\nu'\}}{\left \langle 2^{r+1}j_{r}^{6,s}j^4_2\nu_4-2^rj_{r}^{6,s}j^4_2\Sigma\nu' \right \rangle} ( \infty\!\geq\! r,s\geq 1)  \label{equ:pi7(Fr6s)} \\
	&=\left\{\!\!\!\begin{array}{ll}
		\Z_2\{j_{r}^{6,s}\!j^5_1\eta^2_5\}\oplus \Z_8\{j_{r}^{6,s}\!j^4_2\nu_4\}\oplus \Z_2\{2j_{r}^{6,s}\!j^4_2\nu_4\!-\!j_{r}^{6,s}\!j^4_2\Sigma\nu'\}, & \!\hbox{$r=1$;} \\
	\Z_2\{j_{r}^{6,s}j^5_1\eta^2_5\}\oplus \Z_{2^{r+1}}\{j_{r}^{6,s}\!j^4_2\nu_4\}\oplus \Z_4\{j_{r}^{6,s}j^4_2\Sigma\nu'\},& \hbox{$r\geq 2$.}\\ 
	\end{array}\right. \label{equ:pi7(Fr6s)case}
	\end{align}
	From Lemma \ref{lem: partial calculate1} and (\ref{exact Cf1}) of Lemma \ref{lem:exact m1+m2-1}, 
	we get the following exact sequence with a commutative square
	\begin{align} \footnotesize{\xymatrix{
				\!\!\!\!\pi_{8}( S^6\!\vee\! S^5)\ar[r]^-{ (\partial_{r}^{6,s})_{7\ast}}&\!\pi_{7}(\F{6})\ar[r]&\pi_{7}(\Ce{6})\ar[r]^-{q_{r\ast}^{6,s}}&\pi_{7}( S^6\!\vee\! S^5)\ar[r]^-{ f_{r \ast}^{7,s}}& \pi_{7}(S^6\!\vee\! S^5)\\
				\!\!\!\!\pi_{7}( S^5\!\vee\! S^4)\ar[r]^-{f^{6,s}_{r\ast}}\ar[u]_{\Sigma } &\pi_{7}(S^5\!\vee\! S^4)\ar[u]_{j^{6,s}_{r\ast} }&&} } \label{exact seq for pi7(Ce6)}
	\end{align}
Clearly, in above sequence 
\begin{align}
Ker f_{r\ast}^{7,s} =\Z_2\{j^5_2\eta^2_5\}  \label{Kerfr6s}
\end{align}

	By the commutative square in (\ref{exact seq for pi7(Ce6)})
	\begin{align}
		&(\partial_{r}^{6,s})_{7\ast}( j^6_1\eta^2_6)=(\partial_{r}^{6,s})_{7\ast}\Sigma  (j^5_1\eta^2_5)=j_{r\ast}^{6,s}f^{6,s}_{r\ast}(j^5_1\eta^2_5)=j_{r}^{6,s}(f^{6,s}_{r}j^5_1)\eta^2_5\nonumber \\
		&=j_{r}^{6,s}(j^5_1(2^s\iota_5)+ j^4_2\eta_4)\eta^2_5=j_{r}^{6,s}j^4_2\eta^3_4=2 j_{r}^{6,s}j^4_2\Sigma\nu'; \label{partial7 Z2}\\
		&(\partial_{r}^{6,s})_{7\ast}( j^5_2\nu_5)=(\partial_{r}^{6,s})_{7\ast}\Sigma (j^4_2\nu_4)=j_{r\ast}^{6,s}f^{6,s}_{r\ast}(j^4_2\nu_4)=j_{r}^{6,s}(f_{r}^{6,s}j^4_2)\nu_4\nonumber \\
		&=j_{r}^{6,s}(2^rj^4_2)\nu_4\!=\!j_{r}^{6,s}\!j^4_2(2^{2r}\nu_4\!-\!2^{r-1}(2^r\!-\!1)\Sigma\nu')~(\!\text{ Lemma A.1 of \cite{Resultsfrom}}~\!).\label{partial7 Z12}
	\end{align}
	From (\ref{equ:pi7(Fr6s)}), (\ref{partial7 Z2}) and (\ref{partial7 Z12}), we get $Coker (\partial_{r}^{6,s})_{7\ast}$ equals to 
	\begin{align*}
		&\frac{\Z_2\{j_{r}^{6,s}j^5_1\eta^2_5\}\!\oplus\!\Z_{(2)}\{j_{r}^{6,s}j^4_2\nu_4\}\!\oplus\!\Z_4\{j_{r}^{6,s}j^4_2\Sigma\nu'\}}{\left \langle  2 j_{r}^{6,s}j^4_2\Sigma\nu', 2^{r\!+\!1}j_{r}^{6,s}j^4_2\nu_4\!-\!2^rj_{r}^{6,s}j^4_2\Sigma\nu',2^{2r}j_{r}^{6,s}j^4_2\nu_4\!-\!2^{r\!-\!1}(2^r\!-\!1)j_{r}^{6,s}j^4_2\Sigma\nu'\right \rangle}  
	\end{align*}
	Let $a=j_{r}^{6,s}j^5_1\eta^2_5$, $b=j_{r}^{6,s}j^4_2\nu_4$, $c=j_{r}^{6,s}j^4_2\Sigma\nu'$, then 
		\begin{align*}
	Coker (\partial_{r}^{6,s})_{7\ast}&=\fra{\Z_2\{a\}\oplus\Z_{(2)}\{b\}\oplus\Z_4\{c\}}{2c, 2^{r\!+\!1}b-2^rc,2^{2r}b-2^{r\!-\!1}(2^r\!-\!1)c}	\\
	&=\Z_2\{a\}\oplus\frac{\Z\{b,c\}}{\left \langle 4c,2c, 2^{r\!+\!1}b-2^rc,2^{2r}b-2^{r\!-\!1}(2^r\!-\!1)c \right \rangle} \\
	&=\Z_2\{a\}\oplus\frac{\Z\{b,c\}}{\left \langle 2c, 2^{r\!+\!1}b, 2^{r\!-\!1}(2^r\!-\!1)c \right \rangle}\\
	&=\Z_2\{a\}\oplus \Z_{2^{r+1}}\{b\}\oplus (1-\epsilon_r)\Z_{2}\{c\}.
	\end{align*}
There is a map $\m{r}{4}\xrightarrow{\bar{\theta}}\Ce{6}$ making the following left ladder homotopy commutative and it induces the following right homotopy commutative ladder\\
\begin{align} \footnotesize{\xymatrix{
			S^4\ar[r]^-{2^r\iota_4}\ar[d]_{j^4_2} & S^4\ar[r]^{i_4}\ar[d]_{j^4_2}&  \m{r}{4}\ar[r]^-{p_4}\ar[d]^{\bar{\theta} }& S^5\ar[d]_{j^5_2}\\
			S^5\vee S^4 \ar[r]^-{f^{6,s}_{r}} &  S^5\vee S^4\ar[r]&\Ce{6} \ar[r]^-{q^{6,s}_{r}}&S^6\vee S^5,}}
	\footnotesize{\xymatrix{
			\Omega S^5\ar[r]^-{\partial}\ar[d]_{\Omega j^5_2} & F_{p_4}\ar[r]\ar[d]^{\theta}&  \m{r}{4}\ar[d]^{\bar{\theta} }\\
			\Omega (S^6\vee S^5) \ar[r]^-{\partial^{6,s}_{r}} &  \F{6}\ar[r]&\Ce{6}, }} \label{diagramMr4 to Ce6}
\end{align}
 The left homotopy commutative ladder in (\ref{diagramMr4 to Ce6}) induces the following commutative diagram:
	\begin{align}
		\! \!\! \! \!\small{\xymatrix{
				0\ar[r]&Coker\partial_{7\ast}\ar[d]\ar[r]& \pi_{7}(M_{2^{r}}^4)\ar[d]_{\bar{\theta}_{\ast}} \ar[r]&Ker\partial_{6\ast}=\Z_2\{\eta^2_5\} \ar[d]_{\cong}   \ar[r]& 0\\
				0\ar[r]&Coker (\partial^{6,s}_{r})_{7\ast}  \ar[r]&\pi_{7}(\Ce{6}) \ar[r] &Ker(\partial^{6,s}_{r})_{6\ast}\!=\!\Z_2\{j_2^5\eta^2_5\} \ar[r]& 0
		} } \label{Diagram pi7(M2^r4)to pi7(Ce6)}
	\end{align} 
	where the top short exact sequence comes from (3.15) of \cite{Resultsfrom}; $M_{2^{r}}^4\simeq S^4\vee S^5$ for $r=\infty$ and $Ker(\partial^{6,s}_{r})_{6\ast}\!=\!Ker f_{r\ast}^{7,s}\!=\Z_2\{j_2^5\eta^2_5\}$ in the bottom sequence.
	
	Since the top short exact sequence in (\ref{Diagram pi7(M2^r4)to pi7(Ce6)}) splits, so is the bottom one.
	
Hence, for $ \infty\!\geq\! r,s\geq 1$,	we have
	\begin{align}
		&\pi_{7}(C_{r}^{6,s})\!\cong \! Coker (\partial^{6,s}_{r})_{7\ast}\!\oplus\! Ker  f^{7,s}_{r\ast}\!\cong \!\Z_2\!\oplus\! \Z_{2^{r+1}}\!\oplus\! (1\!-\!\epsilon_r)\Z_2\oplus \Z_2.\nonumber
	\end{align}
	 $\pi_7(C_{r}^{6,\infty})=\pi_7(C_{r}^6\vee S^5)\cong  \pi_7(C_{r}^6)\oplus \pi_7(S^5)\cong \pi_7(C_{r}^6)\oplus \Z_2$ implies that 
	 \begin{align*}
\pi_{7}(C_{r}^{6})\cong
\Z_2\oplus (1-\epsilon_r)\Z_2\oplus\Z_{2^{r+1}}, r\geq 1.
	 \end{align*}
 $\pi_7(C_{\infty,1}^{6,s})=\pi_7(C^{6,s}\vee S^5)\cong \pi_7(C^{6,s})\oplus \Z_{2}  \cong \Z_{2}\oplus\Z_{(2)}\oplus \Z_{2}\oplus\Z_{2}$ implies that
 \begin{align*}
\pi_{7}(C^{6,s})\cong\Z_2\oplus \Z_{(2)}\oplus\Z_{2},s\geq 1.
 \end{align*}
We complete the proof of this lemma. 
\end{proof}

\begin{lemma} \label{lem:Stable pi(n+3)Chang} Let $r,s\geq 1$, $n\geq 5$
	\begin{align*}
		&	\pi_{n+3}(C_{r}^{n+2})\cong
	 \Z_{2}\oplus \Z_{2^{min\{2,r\}}}\\
		&\pi_{n+3}(C^{n+2,s})\cong\Z_2\oplus \Z_{4}\\
		& 	\pi_{n+3}(C_{r}^{n+2,s})\cong \Z_{2}\oplus\Z_{2}\oplus \Z_{2^{min\{2,r\}}}.
	\end{align*}
\end{lemma}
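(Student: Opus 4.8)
\emph{Approach.} The plan is to run, in the stable range, the same analysis as in the proof of Lemma~\ref{lem:pi7C6} (the case $n=4$), treating the three families uniformly by allowing $r$ or $s$ to be $\infty$ (with $2^{\infty}=0$), so that $C_r^{n+2,\infty}=C_r^{n+2}\vee S^{n+1}$ and $C_{\infty}^{n+2,s}=C^{n+2,s}\vee S^{n+1}$. First I would note that for $n\geq 5$ every homotopy group in sight is stable, and that by Lemma~\ref{lemma Gray}(iii) the space $J_2(M_{f_{r}^{n+2,s}},S^{n+1}\vee S^n)$ is obtained from $S^{n+1}\vee S^n$ by attaching cells of dimension $\geq 2n>n+4$. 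Hence $j_{r}^{n+2,s}{}_{\ast}\colon\pi_{n+3}(S^{n+1}\vee S^n)\to\pi_{n+3}(J_2(M_{f_{r}^{n+2,s}},S^{n+1}\vee S^n))$ is an isomorphism, so Lemma~\ref{lem:exact m1+m2-1} applies (with $X=Y=S^{n+1}\vee S^n$, $m_1=m_2=n$) and yields the exact sequence
\begin{align*}
\pi_{n+3}(S^{n+1}\vee S^n)&\xrightarrow{(f_{r}^{n+2,s})_{\ast}}\pi_{n+3}(S^{n+1}\vee S^n)\xrightarrow{\lambda_{r\ast}^{n+2,s}}\pi_{n+3}(\Ce{n+2})\\
&\xrightarrow{q_{r\ast}^{n+2,s}}\pi_{n+3}(S^{n+2}\vee S^{n+1})\xrightarrow{(f_{r}^{n+3,s})_{\ast}}\pi_{n+3}(S^{n+2}\vee S^{n+1}),
\end{align*}
where $f_{r}^{n+3,s}=\Sigma f_{r}^{n+2,s}$. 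In particular $0\to Coker (f_{r}^{n+2,s})_{\ast}\to\pi_{n+3}(\Ce{n+2})\to Ker (f_{r}^{n+3,s})_{\ast}\to 0$ is exact.

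Next come the two stable computations. From Hilton's formula and the stable $2$-local values one has $\pi_{n+3}(S^{n+1}\vee S^n)\cong\Z_2\{j_1^{n+1}\eta_{n+1}^2\}\oplus\Z_8\{j_2^n\nu_n\}$ and $\pi_{n+3}(S^{n+2}\vee S^{n+1})\cong\Z_2\{j_1^{n+2}\eta_{n+2}\}\oplus\Z_2\{j_2^{n+1}\eta_{n+1}^2\}$. Using $f_{r}^{n+2,s}j_1^{n+1}=2^sj_1^{n+1}+j_2^n\eta_n$, $f_{r}^{n+2,s}j_2^n=2^rj_2^n$, the relations $2^s\eta_{n+1}^2=0$ and $\eta_n^3=4\nu_n$, and $(2^k\iota_n)\circ\alpha=2^k\alpha$ in the stable range, I compute $(f_{r}^{n+2,s})_{\ast}(j_1^{n+1}\eta_{n+1}^2)=j_2^n\eta_n^3=4j_2^n\nu_n$ and $(f_{r}^{n+2,s})_{\ast}(j_2^n\nu_n)=2^rj_2^n\nu_n$, so $Coker (f_{r}^{n+2,s})_{\ast}\cong\Z_2\{j_1^{n+1}\eta_{n+1}^2\}\oplus\Z_{2^{\min\{2,r\}}}\{j_2^n\nu_n\}$. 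Likewise $(f_{r}^{n+3,s})_{\ast}(j_1^{n+2}\eta_{n+2})=j_2^{n+1}\eta_{n+1}^2$ and $(f_{r}^{n+3,s})_{\ast}(j_2^{n+1}\eta_{n+1}^2)=0$, so $Ker (f_{r}^{n+3,s})_{\ast}=\Z_2\{j_2^{n+1}\eta_{n+1}^2\}$. Hence $0\to\Z_2\oplus\Z_{2^{\min\{2,r\}}}\to\pi_{n+3}(\Ce{n+2})\to\Z_2\to 0$ is exact.

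To split this I would mimic diagrams~(\ref{diagramMr4 to Ce6}) and (\ref{Diagram pi7(M2^r4)to pi7(Ce6)}). Since $f_{r}^{n+2,s}j_2^n=2^rj_2^n=j_2^n(2^r\iota_n)$, the maps $j_2^n$ and $j_2^{n+1}$ extend to a morphism of cofibration sequences from $S^n\xrightarrow{2^r\iota_n}S^n\to\m{r}{n}\to S^{n+1}$ to (\ref{cofibre Cr^s n+2}); by Lemma~\ref{J(X,A)toJ(X',A')} this produces $\bar\theta\colon\m{r}{n}\to\Ce{n+2}$ and a commutative diagram relating the short exact sequence above to the analogous one $0\to\Z_{2^{\min\{3,r\}}}\{j_{p_n}\nu_n\}\to\pi_{n+3}(\m{r}{n})\xrightarrow{p_{n\ast}}\Z_2\{\eta_{n+1}^2\}\to 0$ for $\m{r}{n}$, in which the rightmost vertical arrow is the isomorphism $(j_2^{n+1})_{\ast}\colon\Z_2\{\eta_{n+1}^2\}\to\Z_2\{j_2^{n+1}\eta_{n+1}^2\}$. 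Because $\pi_{n+3}(\m{r}{n})\cong\Z_2\oplus\Z_{2^{\min\{3,r\}}}$ by Lemma~\ref{pin+3M2r}(\ref{pin+3Mn}), the top sequence splits (an extension of $\Z_2$ by the cyclic group $\Z_{2^m}$, $m=\min\{3,r\}\geq1$, whose total group is $\Z_2\oplus\Z_{2^m}$ must split); pushing a section of $p_{n\ast}$ forward along $\bar\theta_{\ast}$ and precomposing with $(j_2^{n+1})_{\ast}^{-1}$ gives a section of $q_{r\ast}^{n+2,s}$. Hence $\pi_{n+3}(\Ce{n+2})\cong\Z_2\oplus\Z_2\oplus\Z_{2^{\min\{2,r\}}}$. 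Finally, setting $s=\infty$ gives $\pi_{n+3}(C_r^{n+2})\oplus\pi_{n+3}(S^{n+1})=\pi_{n+3}(C_r^{n+2,\infty})$, so $\pi_{n+3}(C_r^{n+2})\cong\Z_2\oplus\Z_{2^{\min\{2,r\}}}$, and setting $r=\infty$ (so $\min\{2,r\}=2$) gives $\pi_{n+3}(C^{n+2,s})\cong\Z_2\oplus\Z_4$.

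I expect the splitting step to be the main difficulty: one must verify that $\bar\theta$ genuinely induces a morphism between the two short exact sequences — the compatibility at the cokernel end rests on the naturality of the relative James construction in Lemma~\ref{J(X,A)toJ(X',A')} — and one should make sure the degenerate cases $r=\infty$ or $s=\infty$, where $\m{r}{n}=S^n\vee S^{n+1}$ and $f_{r}^{n+2,s}$ acquires a null component, go through unchanged. By contrast, the computations of $Coker (f_{r}^{n+2,s})_{\ast}$ and $Ker (f_{r}^{n+3,s})_{\ast}$ are routine once the generators of the relevant $\pi_{n+3}$ of wedges of two spheres and the relation $\eta^3=4\nu$ are in hand.
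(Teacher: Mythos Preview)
Your argument is correct and follows the paper's template: set up the five-term exact sequence from the cofibration~(\ref{cofibre Cr^s n+2}) (allowing $r,s=\infty$), compute $Coker(f_r^{n+2,s})_\ast$ and $Ker(f_r^{n+3,s})_\ast$, split the resulting short exact sequence, and then specialize $s=\infty$ or $r=\infty$ to read off $\pi_{n+3}(C_r^{n+2})$ and $\pi_{n+3}(C^{n+2,s})$. The cokernel and kernel computations match the paper's exactly.

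The one genuine difference is the splitting step. The paper works at $n=5$ and invokes Lemma~\ref{lem: Suspen Cf}: it observes that the suspension $Ker(\partial_r^{6,s})_{6\ast}=\Z_2\{j_2^5\eta_5^2\}\xrightarrow{\Sigma}Ker(\partial_r^{7,s})_{7\ast}=\Z_2\{j_2^6\eta_6^2\}$ is an isomorphism and that the short exact sequence for $\pi_7(\Ce{6})$ was already shown to split in the proof of Lemma~\ref{lem:pi7C6}, so the $n=5$ sequence inherits a section by suspension. You instead rerun the Moore-space comparison from Lemma~\ref{lem:pi7C6} directly in the stable range, building $\bar\theta\colon\m{r}{n}\to\Ce{n+2}$ from the square $f_r^{n+2,s}j_2^n=j_2^n(2^r\iota_n)$ and pushing forward a section of the (split) sequence $0\to\Z_{2^{\min\{3,r\}}}\to\pi_{n+3}(\m{r}{n})\to\Z_2\to 0$ from Lemma~\ref{pin+3M2r}(\ref{pin+3Mn}). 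Both routes are short; yours is self-contained (it does not rely on the unstable $n=4$ computation), while the paper's recycles the splitting already established there.
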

\begin{proof}
	These are stable homotopy groups in Lemma \ref{lem:Stable pi(n+3)Chang}, so without loss of generality, we prove the case $n=5$. We have the following exact sequence for $\infty\geq r,s\geq 1$
	\begin{align} \footnotesize{\xymatrix{
				\!\!\!\!\pi_{8}( S^{6}\!\vee\! S^5)\ar[r]^-{ f_{r\ast}^{7,s}}&\!\pi_{8}( S^{6}\!\vee\! S^5)\ar[r]&\pi_{8}(\Ce{7})\ar[r]^-{q_{r\ast}^{7,s}}&\pi_{8}( S^7\!\vee\! S^6)\ar[r]^-{ f_{r \ast}^{8,s}}& \pi_{8}(S^7\!\vee\! S^6)} } \label{exact seq for Stable Cen+2}
	\end{align}
Clearly, $Ker f_{r \ast}^{8,s}=\Z_2\{j_2^6\eta^2_6\}$. 
By 
\begin{align*}
	&f_{r \ast}^{7,s}(j_1^6\eta^2_{6})=(2^sj_1^6+j_2^5\eta_5)\eta^2_{6}=j_2^5\eta^3_5=4j_2^5\nu_5;
	&f_{r \ast}^{7,s}(j_2^5\nu_{5})=2^rj_2^5\nu_5
\end{align*}
we have $Coker f_{r \ast}^{7,s}=\Z_2\{j_1^6\eta^2_6\}\oplus \Z_{2^{min\{2,r\}}}\{j_2^5\nu_5\}$.
It is easy to see that the suspension homomorphism $Ker(\partial^{6,s}_{r})_{6\ast}\!\!=\! \!Ker f_{r\ast}^{7,s}\!\!=\! \!\Z_2\{j_2^5\eta^2_5\}\xrightarrow{\Sigma} Ker(\partial^{7,s}_{r})_{7\ast}\!\!=\! \!Ker f_{r\ast}^{8,s}\!\!=\! \!\Z_2\{j_2^6\eta^2_6\}$ is an isomorphism and 
 the bottom short exact sequence in (\ref{Diagram pi7(M2^r4)to pi7(Ce6)}) splits, so the following short exact sequence is also split by Lemma \ref{lem: Suspen Cf}
 \begin{align*}
 	0\rightarrow \Z_2\{j_1^6\eta^2_6\}\oplus \Z_{2^{min\{2,r\}}}\{j_2^5\nu_5\}\rightarrow \pi_{8}(\Ce{7})\rightarrow \Z_2\{j_2^6\eta^2_6\}\rightarrow 0.
 \end{align*}

	It shows 
	\begin{align*}
		\pi_{8}(C_{r}^{7,s})
		\cong \Z_{2}\oplus\Z_{2}\oplus \Z_{2^{min\{2,r\}}}.
	\end{align*}
	 $\pi_8(C_{r}^{7,\infty})=\pi_8(C_{r}^7\vee S^6)\cong  \pi_8(C_{r}^7)\oplus \pi_8(S^6)\cong \pi_8(C_{r}^7)\oplus \Z_2$ implies that 
	 \begin{align*}
\pi_{8}(C_{r}^{7})\cong
\Z_2\oplus\Z_{2^{min\{2,r\}}}, r\geq 1.
	 \end{align*}
	$\pi_8(C_{\infty}^{7,s})=\pi_8(C^{7,s}\vee S^6)\cong \pi_8(C^{7,s})\oplus \Z_{2} \cong \Z_2\oplus\Z_2\oplus\Z_{4}$ implies that
	\begin{align*}
\pi_{8}(C^{7,s})\cong\Z_2\oplus \Z_{4}, s\geq 1.
	\end{align*}
	We complete the proof of Lemma \ref{lem:Stable pi(n+3)Chang}.
\end{proof}

\subsection{The $n+4$-homotopy groups of elementary Chang-complexes}
\label{n+4 Chang}
\qquad

\qquad

In this section, we compute the  $n+4$ dimensional homotopy groups of elementary Chang-complexes in $\mathbf{A}_{n}^{2}$ for $n\geq 4$.
\begin{lemma} \label{lem:pi8C6} Let $r,s\geq 1$,
	\begin{align*}
		& \pi_{8}(C_{\eta}^6)\cong \Z_2\\
		& \pi_{8}(C_{r}^{6})\cong
		\Z_2\oplus \Z_{2^{min\{3,r+1\}}}\\
		&\pi_{8}(C^{6,s})\cong	\Z_2\oplus \Z_{2^{min\{3,s\}}}\oplus \Z_{2^{s+1}}\\
		& 	\pi_{8}(C_{r}^{6,s})\cong \Z_2\oplus \Z_{2^{min\{3,s\}}}\oplus \Z_{2^{min\{r,s+1\}}}\oplus \Z_{2^{min\{3,r+1\}}} .
	\end{align*}
\end{lemma}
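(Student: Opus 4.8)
The plan is to follow the same fibration-and-skeleton strategy used to compute $\pi_7$ of the $C^{6}$-type complexes in Lemma \ref{lem:pi7C6}, but now one dimension higher. For each of the four complexes $C_{\eta}^6, C_r^6, C^{6,s}, C_r^{6,s}$ I would exploit the homotopy fibration sequence $\Omega(S^{n+2}\vee S^{n+1})\xrightarrow{\partial_r^{n+2,s}} F_r^{n+2,s}\to C_r^{n+2,s}\xrightarrow{q_r^{n+2,s}} S^{n+2}\vee S^{n+1}$ specialized at $n=4$, together with Lemma \ref{lem: partial calculate1} and the exact sequence (\ref{exact Cf1}) of Lemma \ref{lem:exact m1+m2-1}. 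The first task is to pin down $\pi_8(F_r^{6,s})$: this requires the skeleton $Sk_{2n+1}(F_r^{n+2,s})\simeq (S^{n+1}\vee S^n)\cup_{\gamma_{2n+1,r}^{n+2,s}}C(S^{2n-1}\vee S^{2n}\vee S^{2n})$ from (\ref{eq:Sk2n+1Fn+2}) (at $n=4$ this is a $9$-complex, adequate for computing $\pi_8$), whose attaching map involves $2^r j_2^n[\iota_n,\iota_n]$, $2^r[j_1^{n+1},j_2^n]$, and $2^s[j_1^{n+1},j_2^n]+j_2^n[\iota_n,\iota_n]\eta_{2n-1}$. I would feed the listed groups $\pi_8(S^5\vee S^4)=\Z_8\{j_1^5\nu_5\}\oplus\Z_2\{j_2^4\nu_4\eta_7\}\oplus\Z_2\{j_2^4\Sigma\nu'\eta_7\}\oplus\Z_{(2)}\{[j_1,j_2]\}$ and $\pi_8(S^8\vee S^9\vee S^9)$-type input through the cofibre exact sequence for this $9$-skeleton, using $[\iota_4,\iota_4]=\pm(2\nu_4-\Sigma\nu')$ and the Jacobi/bilinearity identities for Whitehead products to reduce the relations, obtaining $\pi_8(F_r^{6,s})$ as an explicit finitely generated $2$-group depending on $r,s$.

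Next I would compute the boundary map $(\partial_r^{6,s})_{8\ast}:\pi_9(S^6\vee S^5)\to\pi_8(F_r^{6,s})$ on the generators $j_1^6\nu_6$ and $j_2^5\nu_5\eta_8$ of $\pi_9(S^6\vee S^5)=\Z_8\{j_1^6\nu_6\}\oplus\Z_2\{j_2^5\nu_5\eta_8\}$, exactly as in (\ref{partial7 Z2})–(\ref{partial7 Z12}): by Lemma \ref{lem: partial calculate1} the composite $\partial_r^{6,s}\circ\Sigma$ factors as $j_{r\ast}^{6,s}\circ f_{r\ast}^{6,s}$, so $(\partial_r^{6,s})_{8\ast}$ is governed by $f_r^{6,s}j_1^5=2^sj_1^5+j_2^4\eta_4$ and $f_r^{6,s}j_2^4=2^rj_2^4$ acting on $\nu_5,\nu_5\eta_8$ after desuspension, together with Toda's composition formulas (including $\eta^2\nu=0$, $\nu\eta$, and the $2^r\cdot\nu$ formula from Lemma A.1 of \cite{Resultsfrom}). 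This yields $Coker(\partial_r^{6,s})_{8\ast}$ explicitly. Combining with $Ker f_{r\ast}^{8,s}$ (the relevant kernel on $\pi_8(S^7\vee S^6)=\Z_2\{j_1^7\eta_7\}\oplus\Z_2\{j_2^6\eta_6^2\}$-style input, one dimension up from (\ref{Kerfr6s})), I get a short exact sequence $0\to Coker(\partial_r^{6,s})_{8\ast}\to\pi_8(C_r^{6,s})\to Ker f_{r\ast}^{8,s}\to 0$.

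To finish I would establish splitting of this short exact sequence using the same map-of-fibrations comparison with the Moore space: there is $\bar\theta:M_{2^r}^4\to C_r^{6,s}$ as in (\ref{diagramMr4 to Ce6}), and the induced ladder compares the sequence for $\pi_8(M_{2^r}^4)$ (computed in Lemma \ref{pin+4M2r}, item (\ref{pi8M4}), together with the $r=1$ case from \cite{WJProjplane}) with that for $\pi_8(C_r^{6,s})$, the rightmost kernels matching isomorphically; since $\pi_8(M_{2^r}^4)$ splits, so does $\pi_8(C_r^{6,s})$. Then the two ``$\infty$'' degenerations $C_r^{6,\infty}=C_r^6\vee S^5$ and $C_\infty^{6,s}=C^{6,s}\vee S^5$ extract $\pi_8(C_r^6)$ and $\pi_8(C^{6,s})$ from $\pi_8(C_r^{6,s})$ by splitting off $\pi_8(S^5)=\Z_2$ (with $\pi_8(C_\eta^6)\cong\Z_2$ read off from $C_\eta^{6}$ directly, e.g.\ via \cite{Mukai:pi(CPn)} or the $r\to$ generic specialization of the $C_r^6$ computation). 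The main obstacle I expect is the bookkeeping of $\pi_8(F_r^{6,s})$: the Whitehead-product part $\Z_{(2)}\{[j_1,j_2]\}$ and the several $S^9$-cells in $Sk_9(F_r^{6,s})$ make the attaching map and the resulting presentation delicate, and one must be careful (as in the $C^{6,s}$ case, where a free summand $\Z_{(2)}$ survives in $\pi_7(C^{6,s})$) to track exactly when the $2^s[j_1^{n+1},j_2^n]$ term kills or only partially reduces the generator $j_2^4\Sigma\nu'$, since this is what produces the two-parameter orders $2^{\min\{3,s\}}$, $2^{\min\{r,s+1\}}$, $2^{\min\{3,r+1\}}$ appearing in the statement.
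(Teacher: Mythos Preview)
Your overall architecture matches the paper: compute $\pi_8(F_r^{6,s})$ from the $9$-skeleton (\ref{eq:Sk2n+1Fn+2}), evaluate $(\partial_r^{6,s})_{8\ast}$ via Lemma \ref{lem: partial calculate1}, and then split the resulting short exact sequence. However, the splitting step as you describe it does not go through, and this is where the paper does something you have not anticipated.

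The right-hand kernel in the ladder coming from $\bar\theta:M_{2^r}^4\to C_r^{6,s}$ is \emph{not} an isomorphism: one has $Ker\,\partial_{7\ast}\cong\Z_{2^{\min\{3,r\}}}$ for $M_{2^r}^4$ but $Ker(\partial_r^{6,s})_{7\ast}\cong\Z_{2^{\min\{3,r+1\}}}$ for $C_r^{6,s}$ (see (\ref{Kerpartialr6s7})), so for $r=1,2$ the map $j_{2\ast}^5$ is only a proper injection. The paper therefore proceeds in two steps. First, for $s=1$ it invokes Lemma~2.9 of \cite{Resultsfrom}, which needs only injectivity on the right together with the characteristic bound $ch\big(Coker(\partial_r^{6,1})_{8\ast}\big)=2^{\min\{2,r\}}\le 2^{\min\{3,r\}}$; this bound holds precisely because $s=1$ forces $\min\{3,s\}=1$ and $\min\{r,s+1\}\le 2$. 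For $s\ge 2$ that characteristic bound fails (e.g.\ $r=1$, $s=2$ gives $ch=4>2$), so the comparison with $M_{2^r}^4$ alone is insufficient. The paper instead builds a second ladder from a map $\bar\theta_s^1:C_r^{6,1}\to C_r^{6,s}$ induced by $(2^{s-1}j_1^5,j_2^4)$; here the right-hand kernels \emph{are} isomorphic (both equal $\Z_{2^{\min\{3,r+1\}}}$), and the already-established splitting for $s=1$ propagates to all $s$. Without this two-stage argument your splitting claim is unjustified for small $r$.

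A smaller point: in the final extraction step you write ``splitting off $\pi_8(S^5)=\Z_2$'', but $\pi_8(S^5)=\Z_8\{\nu_5\}$, and the wedge $C_r^6\vee S^5$ (resp.\ $C^{6,s}\vee S^5$) contributes nontrivial Whitehead-product cross terms in $\pi_8$. The paper identifies these as $\pi_8(C_r^{10})\cong\Z_{2^r}$ (resp.\ $\pi_8(C^{10,s})\cong\Z_{(2)}$), and only after removing both $\Z_8$ and this extra summand does one read off $\pi_8(C_r^6)$ and $\pi_8(C^{6,s})$ correctly.
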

\begin{clame}\label{clame:pi8F6} For $\infty \geq r,s\geq 1$,$\pi_{8}(\F{6})$ equals to 
  	\begin{align*} \small{
	\left\{\!\!
		\begin{array}{ll}
			\Z_8\{j^{6,s}_{r}\!j_1^5\nu_5\}\oplus \Z_2\{j^{6,s}_{r}\!j_2^4\nu_4\eta_7\}\oplus \Z_{2^{min\{r,s\!+\!1\}}}\{j^{6,s}_{r}[j_1^{5},j_2^4]\}, & \!\!\!\!\!\!\hbox{$\infty\neq  r\geq 1$;} \\
			\Z_8\{j^{6,s}_{r}\!j_1^5\nu_5\}\oplus \Z_2\{j^{6,s}_{r}\!j_2^4\nu_4\eta_7\}\oplus \Z_{2^{min\{r,s\!+\!1\}}}\{j^{6,s}_{r}[j_1^{5},j_2^4]\}\!\oplus\!\Z_{(2)}\{\tilde{j}_1^8\} , & \!\!\hbox{$r=\infty$ ,}
		\end{array}
		\right.}
	\end{align*}
	where $\tilde{j}_1^8\in \pi_{8}(\F{6})$  is a lift of $j_1^8\in 	Ker(\check{\partial}_{9,r}^{6,s})_{7\ast} $ in $(\ref{equ:Kerpartialrs69})$.

\end{clame}
\begin{proof}[Proof of Clame \ref{clame:pi8F6}]
	For $n=4$, by (\ref{eq:Sk2n+1Fn+2}), $Sk_{9}(\F{6})\simeq (S^5\vee S^4)\cup_{\gamma_{9,r}^{6,s}}C(S^7\vee S^8\vee S^8)$ with $\gamma_{9,r}^{6,s}=(2^rj_{2}^{4}[\iota_{4}, \iota_{4}],~2^r[j_1^{5},j_2^4],~ 2^s[j_1^{5},j_2^4]\!+\!j_2^{4}[
	\iota_4,\iota_4]\eta_{7} )=(2^{r+1}\!j_2^4\nu_4\!-\!2^rj_2^4\Sigma\nu',~2^r[j_1^{5},j_2^4],$ $~2^s[j_1^{5},j_2^4]\!+\!j_2^{4}\Sigma\nu'\eta_{7})$ (since the  sign``$\pm$" in $[\iota_4,\iota_4]=\pm(2\nu_4-\Sigma \nu')$ does not affect the homotopy type of $Sk_{9}((\F{6})$, here we chooes the positive sign).  There are the following  cofibration and  fibration sequences respectively
	\begin{align}
		&S^7\vee S^8\vee S^8\xrightarrow{\gamma_{9,r}^{6,s}}S^5\vee S^4\xrightarrow{j^{6,s}_{r}}Sk_{9}(\F{6})\xrightarrow{\check{q}_{9,r}^{6,s}}S^8\vee S^9\vee S^9;  \label{Cof:Sk9F}\\
		&\Omega(S^8\vee S^9\vee S^9)\xrightarrow{\check{\partial}_{9,r}^{6,s}} F_{\check{q}_{9,r}^{6,s}}\xrightarrow{}Sk_{9}(\F{6})\xrightarrow{\check{q}_{9,r}^{6,s}}S^8\vee S^9\vee S^9  \label{Fiber:Sk9F6}
	\end{align}
	where $Sk_{9}(F_{\check{q}_{9,r}^{6,s}})\simeq J_{1}(M_{\gamma_{9,r}^{6,s}}, S^7\vee S^8\vee S^8)\simeq S^5\vee S^4$.
	
	Denote the wedge of three spheres  $S^{n_1}\vee S^{n_2}\vee S^{n_3}$ by  $S^{\vee}_{(n_1,n_2,n_3)}$ and let $j^{n_i}_i$ be the canonical inclusion of the $i$-th wedge summand of $S^{n_1}\vee S^{n_2}\vee S^{n_3}$ for $i=1,2,3$.
	
	By Lemma \ref{lem: partial calculate1} and  Lemma \ref{stable exact seq}, we get the following exact sequence with a commutative square
	\begin{align} \small{\xymatrix{
				\!\!\!\!\!\!\pi_{8}(S^{\vee}_{(7,8,8)})\ar[r]^-{ (\gamma_{9,r}^{6,s})_{8\ast}}&\pi_{8}(S^5\!\vee\! S^4)\ar[r]^{j^{6,s}_{r\ast} }&\pi_{8}(\F{6})\ar[r]^-{\check{q}_{9,r\ast}^{6,s}}&\pi_{8}(S^{\vee}_{(8,9,9)})\ar[r]^-{ (\check{\partial}_{9,r}^{6,s})_{7\ast}}& \pi_{7}(F_{\check{q}_{9,r}^{6,s}})\\
				&&&\pi_{7}(S^{\vee}_{(7,8,8)})\ar[r]^-{(\gamma_{9,r}^{6,s})_{7\ast}}\ar[u]_{\cong\Sigma } &\pi_{7}(S^5\!\vee\! S^4)\ar@{_{(}->}[u]_{\cong }} } \label{exact seq for pi8F6}
	\end{align}
	where the wedge of three spheres  $S^{n_1}\vee S^{n_2}\vee S^{n_3}$ is denoted by  $S^{\vee}_{(n_1,n_2,n_3)}$.
	
	Clearly,  $(\gamma_{9,r}^{6,s})_{7\ast}:\pi_{7}(S^{\vee}_{(7,8,8)})\rightarrow \pi_{7}(S^5\!\vee\! S^4)$ is injective for $\infty \neq r\geq 1$ and $(\gamma_{9,\infty}^{6,s})_{7\ast}=0$, then  the commutative square in (\ref{exact seq for pi8F6}) gives 
	\begin{align}
		Ker(\check{\partial}_{9,r}^{6,s})_{7\ast}=\left\{
		\begin{array}{ll}
			0,  & \hbox{$\infty \neq r\geq 1$;} \\
			\Z_{(2)}\{j_{1}^8\}, & \hbox{$r=\infty$ .}
		\end{array}
		\right.  \label{equ:Kerpartialrs69}
	\end{align}
	 Moreover, $\pi_{8}(S^{\vee}_{(7,8,8)})=\Z_2\{j_1^7\eta_7\}\oplus\Z_{(2)}\{j_2^8\}\oplus\Z_{(2)}\{j_3^8\}$ and $(\gamma_{9,r}^{6,s})_{8\ast}(j_1^7\eta_7)=(2^{r+1}\!j_2^4\nu_4\!-\!2^rj_2^4\Sigma\nu')\eta_7=0$; $(\gamma_{9,r}^{6,s})_{8\ast}(j_2^8)= 2^r[j_1^{5},j_2^4]$; $(\gamma_{9,r}^{6,s})_{8\ast}(j_3^8)= 2^s[j_1^{5},j_2^4]\!+\!j_2^{4}\Sigma\nu'\eta_{7}$.  We get 
	\begin{align}
		&\pi_{8}(\F{6})\cong  Coker(\gamma_{9,r}^{6,s})_{8\ast} \nonumber\\
		=&\fra{\Z_8\{j_1^5\nu_5\}\oplus \Z_2\{j_2^4\nu_4\eta_7\}\oplus \Z_2\{j_2^4\Sigma\nu'\eta_7\}\oplus \Z_{(2)}\{[j_1^{5},j_2^4]\}}{2^r[j_1^{5},j_2^4],2^s[j_1^{5},j_2^4]\!+\!j_2^{4}\Sigma\nu'\eta_{7}}\nonumber\\ 
		=&\Z_8\{j_1^5\nu_5\}\oplus \Z_2\{j_2^4\nu_4\eta_7\}\oplus \fra{\Z_2\{j_2^4\Sigma\nu'\eta_7\}\oplus \Z_{(2)}\{[j_1^{5},j_2^4]\}}{2^r[j_1^{5},j_2^4],2^s[j_1^{5},j_2^4]\!+\!j_2^{4}\Sigma\nu'\eta_{7}}\nonumber\\ 
		=&\Z_8\{j_1^5\nu_5\}\oplus \Z_2\{j_2^4\nu_4\eta_7\}\oplus \Z_{2^{min\{r,s+1\}}}\{[j_1^{5},j_2^4]\}, \label{eqa:Cokergamma9r6s8}
	\end{align}
	since for $a=j_2^{4}\Sigma\nu'\eta_{7}$ and $b=[j_1^{5},j_2^4]$,
	\begin{align*}
		&\fra{\Z_2\{a\}\oplus \Z_{(2)}\{b\}}{2^rb,2^sb\!+\!a}=	\fra{\Z\{a,b\}}{2a,2^rb,2^sb\!+\!a}=\fra{\Z\{a,b\}}{2^{s+1}b,2^rb,2^sb\!+\!a}\\
		=&\fra{\Z\{2^sb\!+\!a,b\}}{2^{min\{r,s+1\}}b,2^sb\!+\!a}=\Z_{2^{min\{r,s+1\}}}\{b\}.
	\end{align*}
	Now we finish the proof of Claim \ref{clame:pi8F6} by (\ref{eqa:Cokergamma9r6s8}).
\end{proof}

\begin{proof}[Proof of Lemma \ref{lem:pi8C6}]
Firstly,  $\pi_{8}(C_{\eta}^6)\cong \Z_2$ comes from	Proposition  8.4 of  \cite{Mukai:pi(CPn)}.

For $\pi_{8}(\Ce{6}) (\infty\geq r,s\geq 1)$, we have the exact sequence with commutative square

\begin{align} \footnotesize{\xymatrix{
			\!\!\!\!\pi_{9}( S^6\vee S^5)\ar[r]^-{ (\partial^{6,s}_{r})_{8\ast}}&\pi_{8}(\F{6})\ar[r]&\pi_{8}(\Ce{6})\ar[r]^-{q^{6,s}_{r\ast}}&\pi_{8}( S^6\!\vee\! S^5)\ar[r]^-{ (\partial^{6,s}_{r})_{7\ast}}& \pi_{7}(\F{6})\\
			\!\!\!\!\pi_{8}( S^5\!\vee\! S^4)\ar[r]^-{f^{6,s}_{r\ast}}\ar[u]_{\Sigma } &\pi_{8}( S^5\vee S^4)\ar[u]_{j^{6,s}_{r\ast} }&& &} }. \label{exact seq for pi8C6}
\end{align}

From (\ref{equ:pi7(Fr6s)case}), (\ref{partial7 Z2}) and (\ref{partial7 Z12}), we get
\begin{align}
\Z_{2^{min\{3,r+1\}}}\cong Ker(\partial_{r}^{6,s})_{7\ast}=
	\left\{
	\begin{array}{ll}
		\Z_4\{j_1^6\eta^2_6+2j_2^5\nu_5\}, & \hbox{$r= 1$;} \\
		\Z_8\{j_1^6\eta^2_6+j_2^5\nu_5\}, & \hbox{$r=2$;}\\
		\Z_8\{j_2^5\nu_5\},& \hbox{$\infty\geq r\geq 3$.}\\
	\end{array}
	\right.  \label{Kerpartialr6s7}
\end{align}
By the commutative square in (\ref{exact seq for pi8C6})
\begin{align*}
		&(\partial^{6,s}_{r})_{8\ast}(j_1^6\nu_6)=j^{6,s}_{r}f^{6,s}_{r}(j_1^5\nu_5)=j^{6,s}_{r}(2^sj_1^5\!+\!j_2^4\eta_4 )\nu_5=2^sj^{6,s}_{r}\!j_1^5\nu_5\!+\!j^{6,s}_{r}\!j_2^4\Sigma\nu'\eta_7\\
		&(\partial^{6,s}_{r})_{8\ast}(j_2^5\nu_5\eta_8)=j^{6,s}_{r}f^{6,s}_{r}(j_2^4\nu_4\eta_7)=j^{6,s}_{r}(2^rj_2^4)\nu_4\eta_7=\begin{cases}	j^{6,s}_{r}j_2^4\Sigma\nu'\eta_7, r=1; \\
			0, \qquad \infty\geq r\geq 2. \quad \end{cases}
\end{align*}
By Clame \ref{clame:pi8F6}, $j^{6,s}_{r}j_2^4\Sigma\nu'\eta_7=0$ in $\pi_{8}(\F{6})$. Thus $Coker(\partial^{6,s}_{r})_{8\ast}$ equals 
\begin{align*} \small{
	\left\{\!\!\!\!
	\begin{array}{ll}
		\Z_{2^{min\{3,s\}}}\{j^{6,s}_{r}\!j_1^5\nu_5\}\!\oplus\! \Z_2\{j^{6,s}_{r}\!j_2^4\nu_4\eta_7\}\!\oplus\! \Z_{2^{min\{r,s\!+\!1\}}}\{j^{6,s}_{r}[j_1^{5},j_2^4]\}, & \!\!\!\!\!\!\!\!\!\!\hbox{$\infty\neq  r\geq 1$;} \\
		\Z_{2^{min\{3,s\}}}\{j^{6,s}_{r}\!j_1^5\nu_5\}\!\oplus\! \Z_2\{j^{6,s}_{r}\!j_2^4\nu_4\eta_7\}\!\oplus\! \Z_{2^{min\{r,s\!+\!1\}}}\{j^{6,s}_{r}[j_1^{5},j_2^4]\}\!\oplus\!\Z_{(2)}\{\tilde{j}_1^8\} , & \!\!\!\!\!\hbox{$r=\infty$ .}
	\end{array}
	\right.}
	\end{align*}
 For $s=1$ in $\Ce{6}$, the left homotopy commutative ladder in (\ref{diagramMr4 to Ce6}) induces the following commutative diagram 
\begin{align}
	\! \!\! \! \!\small{\xymatrix{
			0\ar[r]&Coker\partial_{8\ast}\ar[d]\ar[r]& \pi_{8}(M_{2^{r}}^4)\ar[d]_{\bar{\theta}_{\ast}} \ar[r]&Ker\partial_{7\ast}\cong\Z_{2^{min\{3,r\}}} \ar@{^{(}->}[d]^{j_{2\ast}^5|_{Ker\partial_{7\ast}}}   \ar[r]& 0\\
			0\ar[r]&Coker (\partial^{6,1}_{r})_{8\ast}  \ar[r]&\pi_{8}(C^{6,1}_{r}) \ar[r] &Ker(\partial^{6,1}_{r})_{7\ast}\!\cong\!\Z_{2^{min\{3,r+1\}}}\ar[r]& 0
	} } \label{Diagram pi8(M2^r4)to pi8(Ce16)}
\end{align} 
where the top exact sequence, which splits, comes from the top row of (\ref{Diagram pi8(M2^1)to pi8(M2^r)}) and the right map $j_{2\ast}^5|_{Ker\partial_{7\ast}}$ is injection. Moreover, the characteristic  
\begin{align*}
ch(Coker (\partial^{6,1}_{r})_{8\ast})=2^{min\{2,r\}} \leq 2^{min\{3,r\}}.
\end{align*}
Hence, by Lemma 2.9 of \cite{Resultsfrom}, the bottom short exact sequence in (\ref*{Diagram pi8(M2^r4)to pi8(Ce16)}) also splits.

For $s\geq 2$, we have the following left homotopy commutative ladder of the cofibration sequences which indues the right homotopy commutative ladder of the fibration sequences
	\begin{align} \footnotesize{\xymatrix{
				S^5\!\vee\!S^4\ar[r]^-{f_{r}^{6,1}}\ar@{=}[d]_{id} & S^5\!\vee\!S^4\ar[r]^{i_4}\ar[d]_{(2^{s\!-\!1}j_1^5, j_2^4)}&  C^{6,1}_{r}\ar[r]^-{q^{6,s}_{r}}\ar[d]^{\bar{\theta}_{s}^1 }& S^6\!\vee\!S^5\ar@{=}[d]_{id}\\
			S^5\!\vee\!S^4 \ar[r]^-{f^{6,s}_{r}} &  S^5\!\vee\!S^4 \ar[r]&\Ce{6} \ar[r]^-{q^{6,s}_{r}}&S^6\!\vee\!S^5,}}
		\footnotesize{\xymatrix{
				\Omega (S^6\vee S^5)\ar[r]^-{\partial^{6,1}_{r}}\ar@{=}[d]_{id} & F^{6,1}_{r}\ar[r]\ar[d]^{\theta_{s}^1}&  C^{6,1}_{r}\ar[d]^{\bar{\theta}_{s}^1 }\\
				\Omega (S^6\vee S^5) \ar[r]^-{\partial^{6,s}_{r}} &  \F{6}\ar[r]&\Ce{6}, }} \label{diagram Ce16 to Ce6}
	\end{align}
Then there is  the following commutative diagram 
\begin{align}
	\! \!\! \! \!\small{\xymatrix{
			0\ar[r]&Coker (\partial^{6,1}_{r})_{8\ast}  \ar[r]\ar[d]&\pi_{8}(C^{6,1}_{r}) \ar[d]^{\bar{\theta}_{s\ast}^1}\ar[r] &Ker(\partial^{6,1}_{r})_{7\ast}\!\cong\!\Z_{2^{min\{3,r+1\}}}\ar[r]\ar@{=}[d]& 0\\
			0\ar[r]&Coker (\partial^{6,s}_{r})_{8\ast}  \ar[r]&\pi_{8}(C^{6,s}_{r}) \ar[r] &Ker(\partial^{6,s}_{r})_{7\ast}\!\cong\!\Z_{2^{min\{3,r+1\}}}\ar[r]& 0
	} } \label{Diagram pi8(Ce16)to pi8(Ce6)}
\end{align} 
Since the top short exact sequence  in (\ref{Diagram pi8(Ce16)to pi8(Ce6)}) is split, so is the  bottom  one.

Hence, for $ \infty\!\geq\! r,s\geq 1$,	we have
\begin{align*}
\pi_{8}(C_{r}^{6,s})\cong &Coker (\partial_{r}^{6,s})_{8\ast}\!\oplus\! Ker (\partial_{r}^{6,s})_{7\ast}\\
\cong&	\left\{
	\begin{array}{ll}
	\Z_2\!\oplus\! \Z_{2^{min\{3,s\}}}\!\oplus\! \Z_{2^{min\{r,s\!+\!1\}}}\!\oplus\! \Z_{2^{min\{3,r\!+\!1\}}}  & \hbox{$\infty\neq r\geq 1$;} \\
		\Z_2\!\oplus\! \Z_{2^{min\{3,s\}}}\!\oplus\! \Z_{2^{min\{r,s\!+\!1\}}}\!\oplus\! \Z_{2^{min\{3,r\!+\!1\}}}\!\oplus\!\Z_{(2)} , & \hbox{$r=\infty$ .}
	\end{array}
	\right.
\end{align*}

$\pi_8(C_{r}^{6,\infty})=\pi_8(C_{r}^6\vee S^5)\cong  \pi_8(C_{r}^6)\oplus \pi_8(S^5)\oplus\pi_8(C_{r}^{10}) \cong \pi_8(C_{r}^6)\oplus \Z_8 \oplus \Z_{2^{r}}$ implies that 
\begin{align*}
	\pi_{8}(C_{r}^{6})\cong
	\Z_2\oplus \Z_{2^{min\{3,r+1\}}}, r\geq 1.
\end{align*}
$\pi_8(C_{\infty}^{6,s})=\pi_8(C^{6,s}\vee S^5)\cong \pi_8(C^{6,s})\oplus \pi_8(S^5) \oplus\pi_8(C^{10,s}) \cong \pi_8(C^{6,s})\oplus \Z_{8} \oplus \Z_{(2)}$, implies that
\begin{align*}
	\pi_{8}(C^{6,s})\cong	\Z_2\oplus \Z_{2^{min\{3,s\}}}\oplus \Z_{2^{s+1}},s\geq 1.
\end{align*}
We complete the proof of  Lemma \ref{lem:pi8C6}.  
\end{proof}

 \begin{lemma} \label{lem:pi9C7} Let $r,s\geq 1$,
 	\begin{align*}
 		& \pi_{9}(C_{\eta}^7)\cong \Z_2\\
 		& \pi_{9}(C_{r}^{7})\cong
 		\Z_2\oplus \Z_{2^{min\{3,r+1\}}}\\
 		&\pi_{9}(C^{7,s})\cong\Z_2\oplus\Z_{2^{min\{3,s\}}}\\
 		& 	\pi_{9}(C_{r}^{7,s})\cong \Z_2\oplus\Z_{2^{min\{3,s\}}}\oplus \Z_{2^{min\{3,r+1\}}}.
 	\end{align*}
 \end{lemma}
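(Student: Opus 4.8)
As in Lemma~\ref{lem:pi8C6} (with $C_\eta^7=\Sigma^3\CP{2}$), the value $\pi_9(C_\eta^7)\cong\Z_2$ will be quoted from \cite{Mukai:pi(CPn)}. For the other three complexes the plan is to rerun the argument of Lemma~\ref{lem:pi8C6} one dimension higher, starting from the homotopy fibre sequence $\Omega(S^7\vee S^6)\xrightarrow{\partial^{7,s}_r}\F{7}\to\Ce{7}\xrightarrow{q^{7,s}_r}S^7\vee S^6$, which by Lemma~\ref{lem: partial calculate1} gives the exact sequence
\[
\pi_{10}(S^7\vee S^6)\xrightarrow{(\partial^{7,s}_r)_{9\ast}}\pi_9(\F{7})\to\pi_9(\Ce{7})\xrightarrow{q^{7,s}_{r\ast}}\pi_9(S^7\vee S^6)\xrightarrow{(\partial^{7,s}_r)_{8\ast}}\pi_8(\F{7})
\]
together with the homotopy commutative squares $(\partial^{7,s}_r)_{m\ast}\circ\Sigma=j^{7,s}_{r\ast}\circ f^{7,s}_{r\ast}$ on $\pi_m(S^6\vee S^5)$, $m=8,9$. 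The simplification over the $n=4$ case is that $[\iota_5,\iota_5]$ has order $2$ (since $\pi_9(S^5)=\Z_2$), so by (\ref{eq:Sk2nFn+2}) the attaching map of the bottom cone cell of $\F{7}$ is null and $Sk_{10}(\F{7})\simeq(S^6\vee S^5)\vee S^{10}$; hence $\pi_9(\F{7})\cong\pi_9(S^6\vee S^5)=\Z_8\{j^{7,s}_rj^6_1\nu_6\}\oplus\Z_2\{j^{7,s}_rj^5_2\nu_5\eta_8\}$ for all $\infty\geq r\geq1$, carrying no free summand --- this is the first point at which the $n=5$ answer genuinely departs from the fully stable $n\geq6$ one.

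Next I would evaluate the two connecting homomorphisms from the commutative squares, using that $\Sigma$ is onto in the relevant dimensions. Since $\pi_{10}(S^7\vee S^6)=\Z_8\{j^7_1\nu_7\}$ is the suspension of $\Z_8\{j^6_1\nu_6\}\subset\pi_9(S^6\vee S^5)$, one gets $(\partial^{7,s}_r)_{9\ast}(j^7_1\nu_7)=j^{7,s}_r(f^{7,s}_rj^6_1)\nu_6=j^{7,s}_r(2^sj^6_1+j^5_2\eta_5)\nu_6=2^sj^{7,s}_rj^6_1\nu_6+j^{7,s}_rj^5_2(\eta_5\nu_6)$, and $\eta_5\nu_6=\Sigma(\eta_4\nu_5)=\Sigma^2\nu'\eta_8=2\nu_5\eta_8=0$ (using $\eta_4\nu_5=\Sigma\nu'\eta_7$ from the proof of Lemma~\ref{lem:pi8C6} and $\Sigma^2\nu'=2\nu_5$), so $Coker(\partial^{7,s}_r)_{9\ast}\cong\Z_{2^{\min\{3,s\}}}\oplus\Z_2$. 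On $\pi_8(S^6\vee S^5)=\Z_2\{j^6_1\eta^2_6\}\oplus\Z_8\{j^5_2\nu_5\}$ one has $f^{7,s}_{r\ast}(j^6_1\eta^2_6)=j^5_2\eta^3_5=4j^5_2\nu_5$ and $f^{7,s}_{r\ast}(j^5_2\nu_5)=2^rj^5_2\nu_5$, whence $Ker f^{7,s}_{r\ast}\cong\Z_{2^{\min\{3,r+1\}}}$ and therefore $Ker(\partial^{7,s}_r)_{8\ast}=\Sigma\,Ker f^{7,s}_{r\ast}\cong\Z_{2^{\min\{3,r+1\}}}$. This places $\pi_9(\Ce{7})$ in a short exact sequence $0\to\Z_{2^{\min\{3,s\}}}\oplus\Z_2\to\pi_9(\Ce{7})\to\Z_{2^{\min\{3,r+1\}}}\to0$.

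To split it I would apply Lemma~\ref{lem: Suspen Cf} with $k=1$ to $\Ce{7}=\Sigma\Cs$: the corresponding sequence for $\pi_8(\Cs)$ splits by Lemma~\ref{lem:pi8C6}, and its hypothesis (\ref{map:restrictKer}) reduces to the suspension isomorphism $Ker(\partial^{6,s}_r)_{7\ast}\xrightarrow{\Sigma}Ker(\partial^{7,s}_r)_{8\ast}$ (both cyclic of order $2^{\min\{3,r+1\}}$ by (\ref{Kerpartialr6s7}) and the above), which is clear because it is the restriction of the isomorphism $\Sigma\colon\pi_8(S^6\vee S^5)\xrightarrow{\cong}\pi_9(S^7\vee S^6)$. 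This gives $\pi_9(\Ce{7})\cong\Z_2\oplus\Z_{2^{\min\{3,s\}}}\oplus\Z_{2^{\min\{3,r+1\}}}$ for all $\infty\geq r,s\geq1$. Finally, putting $s=\infty$ in $C_r^{7,\infty}=C_r^7\vee S^6$ and $r=\infty$ in $C_\infty^{7,s}=C^{7,s}\vee S^6$, and noting that --- unlike for $n=4$ --- every Hilton--Milnor cross term of these two wedges is at least $9$-connected and hence contributes nothing in dimension $9$, one strips off $\pi_9(S^6)=\Z_8$ to read off $\pi_9(C^7_r)$ and $\pi_9(C^{7,s})$.

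The step demanding the most care is the determination of $\pi_9(\F{7})$ and of $(\partial^{7,s}_r)_{9\ast}$: because $n=5$ lies on the edge of the stable range one cannot simply replace these by their stable analogues, and one must track the genuinely unstable classes $[\iota_5,\iota_5]$, $\Sigma\nu'$ and $\nu_5\eta_8$ and decide exactly which $\Z_2$-summand survives --- it is the survival of $\Z_2\{j^5_2\nu_5\eta_8\}$ that forces $\pi_9(C_\eta^7)=\Z_2$ instead of $0$. All the rest (the $\min\{3,\,\cdot\,\}$-exponent arithmetic, the computation of $Ker f^{7,s}_{r\ast}$, the splitting formalism, and the wedge bookkeeping) is routine.
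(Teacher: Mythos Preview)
Your proposal is correct and follows essentially the same route as the paper. The only noticeable difference is in the treatment of $\pi_9(C_\eta^7)$: the paper computes it directly from the cofibration $S^6\xrightarrow{\eta_5}S^5\to C_\eta^7\to S^7$ (using $\eta_5\nu_6=0$, so $\pi_9(C_\eta^7)\cong\pi_9(S^5)$), whereas you quote it from \cite{Mukai:pi(CPn)}; and for $\Ce{7}$ the paper invokes Lemma~\ref{lem:exact m1+m2-1} to pass immediately to the sequence with $f^{7,s}_{r\ast}$ on both ends, while you unwind that lemma and work with the commutative squares from Lemma~\ref{lem: partial calculate1}---but this is the same argument.
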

\begin{proof}
	For $C_{\eta}^7$, the cofibration sequence $S^6\xrightarrow{\eta_5}S^5\rightarrow C_{\eta}^7\rightarrow S^7\xrightarrow{\eta_6} S^6$ induces the following exact sequence by Lemma \ref{stable exact seq}
	\begin{align*}
		\pi_{9}(S^6)\xrightarrow{\eta_{5\ast}}\pi_{9}(S^5)\rightarrow   \pi_{9}(C_{\eta}^7)\rightarrow \pi_{9}(S^7) \xrightarrow{\eta_{6\ast}}\pi_{9}(S^6).
	\end{align*}
	It is easy to know $\pi_{9}(C_{\eta}^7)\cong Coker ~~\eta_{5\ast}\cong \pi_{9}(S^5)\cong \Z_2$ since $\eta_5\nu_6=0$.
	
	For $\Ce{7}$, by (\ref{eq:Sk2nFn+2}) and $[\iota_5,\iota_5]=\nu_5\eta_8$ (page 80 of \cite{Toda Whitehead Products}), we have 
	\begin{align*}
		Sk_{10}(\F{7})\simeq (S^6\vee S^5)\cup_{2^rj_2^5[\iota_5,\iota_5]} C S^9\simeq S^6\vee S^5\vee S^{10}.
	\end{align*}
	So  $j_{r}^{7,s}: S^6\vee S^5\hookrightarrow J_{2}(M_{f_{r}^{7,s}}, S^6\vee S^5)$ induces isomorphism $j_{r\ast}^{7,s}:\pi_{9}( S^6\vee S^5)\xrightarrow{\cong} \pi_9(J_{2}(M_{f_{r}^{7,s}}, S^6\vee S^5))$. By Lemma  (\ref{exact Cf}) of Lemma \ref{lem:exact m1+m2-1}, we get the  exact sequence for $\infty\geq r,s\geq 1$
\begin{align} \footnotesize{\xymatrix{
			\!\!\!\!\pi_{9}( S^{6}\!\vee\! S^5)\ar[r]^-{ f_{r\ast}^{7,s}}&\!\pi_{9}( S^{6}\!\vee\! S^5)\ar[r]&\pi_{9}(\Ce{7})\ar[r]^-{q_{r\ast}^{7,s}}&\pi_{9}( S^7\!\vee\! S^6)\ar[r]^-{ f_{r \ast}^{8,s}}& \pi_{9}(S^7\!\vee\! S^6)} } \label{exact seq for pi9 Ce7}
\end{align}
\begin{align*}
	&f_{r \ast}^{8,s}(j_1^7\eta^2_{7})=(2^sj_1^7+j_2^6\eta_6)\eta^2_{7}=j_2^6\eta^3_6=4j_2^6\nu_6;
	&f_{r \ast}^{8,s}(j_2^6\nu_{6})=2^rj_2^6\nu_6.
\end{align*}
In (\ref{exact seq for pi9 Ce7}) we have  
\begin{align}
	\Z_{2^{min\{3,r+1\}}}\cong Ker f_{r \ast}^{8,s}=
	\left\{
	\begin{array}{ll}
		\Z_4\{j_1^7\eta^2_7+2j_2^6\nu_6\}, & \hbox{$r= 1$;} \\
		\Z_8\{j_1^7\eta^2_7+j_2^6\nu_6\}, & \hbox{$r=2$;}\\
		\Z_8\{j_2^6\nu_6\},& \hbox{$\infty\geq r\geq 3$.}\\
	\end{array}
	\right.  \label{Kerfr8s}
\end{align}
\begin{align*}
	&f_{r \ast}^{7,s}(j_1^6\nu_{6})=(2^sj_1^6+j_2^5\eta_5)\nu_{6}=2^sj_1^6\nu_{6};
	&f_{r \ast}^{7,s}(j_2^5\nu_{5}\eta_8)=2^rj_2^5\nu_{5}\eta_8=0.
\end{align*}
In (\ref{exact seq for pi9 Ce7}) 
\begin{align*}
	Coker f_{r \ast}^{7,s}=\Z_{2^{min\{3,s\}}}\{j_1^6\nu_6\}\oplus \Z_{2}\{j_2^5\nu_5\eta_8\}.
\end{align*}

Comparing  (\ref{Kerpartialr6s7}) and (\ref{Kerfr8s}),  it is known that the  suspension homomorphism $Ker(\partial^{6,s}_{r})_{7\ast}\xrightarrow{\Sigma} Ker(\partial^{7,s}_{r})_{8\ast}\!\!=\! \!Ker f_{r\ast}^{8,s}$ is an isomorphism. Thus the following short exact sequence is  split by Lemma \ref{lem: Suspen Cf}
\begin{align*}
	0\rightarrow \Z_{2^{min\{3,s\}}}\oplus \Z_{2}\rightarrow \pi_{9}(\Ce{7})\rightarrow \Z_{2^{min\{3,r+1\}}}\rightarrow 0,
\end{align*}
	i.e.,  $\pi_{9}(\Ce{7})\cong \Z_2\oplus\Z_{2^{min\{3,s\}}}\oplus \Z_{2^{min\{3,r+1\}}} (\infty\geq r,s \geq 1)$.
	
	$\pi_9(C_{r}^{7,\infty})=\pi_9(C_{r}^7\vee S^6)\cong  \pi_9(C_{r}^7)\oplus \pi_9(S^6)\cong \pi_9(C_{r}^7)\oplus \Z_8$ implies that 
	\begin{align*}
		\pi_{9}(C_{r}^{7})\cong
		\Z_2\oplus \Z_{2^{min\{3,r+1\}}}, r\geq 1.
	\end{align*}
	$\pi_9(C_{\infty}^{7,s})=\pi_9(C^{7,s}\vee S^6)\cong \pi_9(C^{7,s})\oplus\pi_9(S^6)\cong \pi_9(C^{7,s})\oplus \Z_8$ implies that
	\begin{align*}
		\pi_{9}(C^{7,s})\cong\Z_2\oplus\Z_{2^{min\{3,s\}}},s\geq 1.
	\end{align*}
	We complete the proof of Lemma \ref{lem:pi9C7}.
\end{proof}

 \begin{lemma} \label{lem:Stable pin+4Cn+2} Let $r,s\geq 1$, $n\geq 6$
	\begin{align*}
		& \pi_{n+4}(C_{\eta}^{n+2})\cong 0\\
		& \pi_{n+4}(C_{r}^{n+2})\cong
		\Z_{2^{min\{3,r+1\}}}\\
		&\pi_{n+4}(C^{n+2,s})\cong\Z_{2^{min\{3,s\}}}\\
		& 	\pi_{n+4}(C_{r}^{n+2,s})\cong \Z_{2^{min\{3,s\}}}\oplus \Z_{2^{min\{3,r+1\}}}.
	\end{align*}
\end{lemma}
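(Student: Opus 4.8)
The plan is to run the same argument as in Lemma~\ref{lem:Stable pi(n+3)Chang}. All four groups are stable for $n\geq 6$: each of the complexes involved is $(n-1)$-connected and $n+4\leq 2n-2$, so by the Freudenthal suspension theorem $\Sigma$ induces an isomorphism $\pi_{n+4}(C)\to\pi_{n+5}(\Sigma C)$ for each of them, and it suffices to compute the case $n=6$. I would first treat the ``full'' Chang complex $C_r^{n+2,s}$, allowing $r$ or $s$ to be $\infty$ via the convention $2^{\infty}=0$, and then obtain the other three from the wedge splittings $C_r^{n+2,\infty}\simeq C_r^{n+2}\vee S^{n+1}$ and $C_\infty^{n+2,s}\simeq C^{n+2,s}\vee S^{n+1}$ recorded after $(\ref{cofibre Cr^s n+2})$, together with $\pi_{n+4}(S^{n+1})\cong\Z_8$ (which $2$-locally is $\pi_3^s$) and cancellation of finite abelian groups --- exactly the bookkeeping used for $n=4,5$.

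For $C_r^{8,s}$ I would apply $(\ref{exact Cf})$ of Lemma~\ref{lem:exact m1+m2-1} (equivalently Lemma~\ref{stable exact seq}) to the cofibration $(\ref{cofibre Cr^s n+2})$, obtaining the exact sequence
\[ \pi_{10}(S^{7}\vee S^{6})\xrightarrow{f_{r\ast}^{8,s}}\pi_{10}(S^{7}\vee S^{6})\to\pi_{10}(C_{r}^{8,s})\xrightarrow{q_{r\ast}^{8,s}}\pi_{10}(S^{8}\vee S^{7})\xrightarrow{f_{r\ast}^{9,s}}\pi_{10}(S^{8}\vee S^{7}). \]
Here $\pi_{10}(S^{7}\vee S^{6})\cong\Z_8\{j_1^7\nu_7\}$ (since $\pi_{10}(S^{6})=\pi_4^s=0$) and $\pi_{10}(S^{8}\vee S^{7})\cong\Z_2\{j_1^8\eta_8^2\}\oplus\Z_8\{j_2^7\nu_7\}$. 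Using $\eta_6\nu_7=0$ one sees $f_{r\ast}^{8,s}$ is multiplication by $2^s$ on $\Z_8$, so $\mathrm{Coker}\,f_{r\ast}^{8,s}\cong\Z_{2^{\min\{3,s\}}}$; using the $2$-local relations $\eta_m^3=4\nu_m$ and $2^s\eta_m^2=0$, solving $4a+2^rb\equiv 0\pmod 8$ for $(a,b)\in\Z_2\oplus\Z_8$ gives $\mathrm{Ker}\,f_{r\ast}^{9,s}\cong\Z_{2^{\min\{3,r+1\}}}$. Thus there is a short exact sequence $0\to\Z_{2^{\min\{3,s\}}}\to\pi_{10}(C_r^{8,s})\to\Z_{2^{\min\{3,r+1\}}}\to 0$.

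The one genuinely nontrivial step --- the part I expect to be the main obstacle --- is showing that this short exact sequence splits. I would deduce it from Lemma~\ref{lem: Suspen Cf} applied to the cofibration of $C_r^{7,s}$ with $k=1$: the corresponding sequence $0\to\mathrm{Coker}\,f_{r\ast}^{7,s}\to\pi_9(C_r^{7,s})\to\mathrm{Ker}\,f_{r\ast}^{8,s}\to 0$ was already shown to split in Lemma~\ref{lem:pi9C7}, and comparing $(\ref{Kerfr8s})$ with its suspension shows that $\Sigma$ restricts to an isomorphism $\mathrm{Ker}\,f_{r\ast}^{8,s}\to\mathrm{Ker}\,f_{r\ast}^{9,s}$ (both are $\Z_{2^{\min\{3,r+1\}}}$ on corresponding generators, and both coincide with the kernels $\mathrm{Ker}\,\partial_{\ast}$ occurring in Lemma~\ref{lem: Suspen Cf} since $j_{p\ast}$ is an isomorphism in the stable range). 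Lemma~\ref{lem: Suspen Cf} then yields the splitting, so $\pi_{n+4}(C_r^{n+2,s})\cong\Z_{2^{\min\{3,s\}}}\oplus\Z_{2^{\min\{3,r+1\}}}$ for all $n\geq 6$.

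Finally I would read off the three remaining cases. Taking $s=\infty$ in the formula just obtained gives $\pi_{n+4}(C_r^{n+2})\oplus\Z_8\cong\Z_8\oplus\Z_{2^{\min\{3,r+1\}}}$, hence $\pi_{n+4}(C_r^{n+2})\cong\Z_{2^{\min\{3,r+1\}}}$; taking $r=\infty$ gives $\pi_{n+4}(C^{n+2,s})\oplus\Z_8\cong\Z_{2^{\min\{3,s\}}}\oplus\Z_8$, hence $\pi_{n+4}(C^{n+2,s})\cong\Z_{2^{\min\{3,s\}}}$. For $C_\eta^{n+2}$ the cofibration $S^{n+1}\xrightarrow{\eta_n}S^n\to C_\eta^{n+2}\to S^{n+2}\xrightarrow{\eta_{n+1}}S^{n+1}$ together with Lemma~\ref{stable exact seq} gives $\pi_{n+4}(C_\eta^{n+2})=0$, because $\pi_{n+4}(S^n)=\pi_4^s=0$ while $\eta_{n+1\ast}$ is injective on $\pi_{n+4}(S^{n+2})=\Z_2\{\eta_{n+2}^2\}$, sending the generator to $\eta_{n+1}^3=4\nu_{n+1}\neq 0$ in $\pi_{n+4}(S^{n+1})\cong\Z_8$.
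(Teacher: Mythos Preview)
Your proposal is correct and follows essentially the same approach as the paper: reduce to $n=6$ by stability, set up the exact sequence for $\pi_{10}(C_r^{8,s})$ from the cofibration~$(\ref{cofibre Cr^s n+2})$, compute the cokernel and kernel exactly as in Lemma~\ref{lem:pi9C7}, and deduce the splitting from Lemma~\ref{lem: Suspen Cf} using the splitting already established at $n=5$. The paper in fact writes down the same exact sequence and then simply says ``the proof of this lemma is the same as that of Lemma~\ref{lem:pi9C7} and we omit it,'' so you have supplied the details the authors left implicit.
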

\begin{proof}
 Since the homotopy groups in this Lemma are stable, we only consider the case $n=6$ and there is  the following  exact sequence 
 \begin{align*} \footnotesize{\xymatrix{
 			\!\!\!\!\pi_{10}( S^{7}\!\vee\! S^6)\ar[r]^-{ f_{r\ast}^{8,s}}&\!\pi_{10}( S^{7}\!\vee\! S^6)\ar[r]&\pi_{10}(\Ce{8})\ar[r]^-{q_{r\ast}^{8,s}}&\pi_{10}( S^8\!\vee\! S^7)\ar[r]^-{ f_{r \ast}^{9,s}}& \pi_{10}(S^8\!\vee\! S^7).} }
 \end{align*}
 Then the proof of this lemma is the same  as that of Lemma \ref{lem:pi9C7} and we omit it.
\end{proof}

Although the generators of the homotopy groups in Theorem \ref{MainThm} are not listed, we can write the generators of these homotopy groups easily by the  proof of the lemmas in Section \ref{sec: Moore space} and Section \ref{sec: Chang-complexes}.
\newline
~~

\textbf{Example.}~~The generators of $\pi_8(\Ce{6})$ are given by the following
\begin{align*}	\Z_{2^{min\{3,s\}}}\{j_C^5\nu_5\}\!\oplus\! \Z_2\{j_C^4\nu_4\eta_7\}\!\oplus\! \Z_{2^{min\{r,s\!+\!1\}}}\{\lambda^{6,s}_{r}[j_1^{5},j_2^4]\}\oplus \Z_{2^{min\{3,r\!+\!1\}}}\{\tilde{\alpha}_{r}\}
\end{align*}
where $\lambda^{6,s}_{r}$ is given in (\ref{cofibre Cr^s n+2}) for $n=4$, which is also the composition  $S^5\vee S^4\stackrel{j_{r}^{6,s}}\hookrightarrow \F{6}\rightarrow \Ce{6}$ as pointed out in  the proof of Lemma 4.1.of \cite{Gray};
$j_{C}^5$ and $j_{C}^4$ are the compositions of canonical inclusions 
$S^5\stackrel{j_1^5}\hookrightarrow S^5\vee S^4 \stackrel{\lambda^{6,s}_{r}}\hookrightarrow \Ce{6}$  and 
$S^4\stackrel{j_2^4}\hookrightarrow S^5\vee S^4 \stackrel{\lambda^{6,s}_{r}}\hookrightarrow \Ce{6}$  respectively;
$\tilde{\alpha}_{r}$ is a lift of the generator of $Ker(\partial_{r}^{6,s})_{7\ast}$ in (\ref{Kerpartialr6s7}).

\subsection*{Acknowledgements}

The second author was supported by National Natural Science Foundation of China (Grant No. 11701430).

\end{document}